\numberwithin{equation}{section}
\newtheorem{Theorem}{Theorem}[section]
\newtheorem*{Theorem*}{Theorem}
\newtheorem{Corollary}[Theorem]{Corollary}
\newtheorem{Lemma}[Theorem]{Lemma}
\newtheorem{Proposition}[Theorem]{Proposition}
\newtheorem{Assumption}[Theorem]{Assumption}
\theoremstyle{definition}
\newtheorem{Definition}[Theorem]{Definition}
\newtheorem{Remark}[Theorem]{Remark}
 \def\L{{\mathcal L}}
 \def\P{{\mathcal P}}
 \def\D{{\mathcal D}}
 \def\p{{\mathbb P}}
 \def\e{{\mathbb E}}
 \def\r{{\mathbb R}}
 \def\wr{{\textnormal{Wr}}}
 \def\d{{\textnormal d}}
 \def\i{{\textnormal i}}
\begin{document}

\allowdisplaybreaks

\newcommand{\arXivNumber}{2405.11051}

\renewcommand{\PaperNumber}{099}

\FirstPageHeading

\ShortArticleName{Darboux Transformation of Diffusion Processes}

\ArticleName{Darboux Transformation of Diffusion Processes}

\Author{Alexey KUZNETSOV and Minjian YUAN}
\AuthorNameForHeading{A.~Kuznetsov and M.~Yuan}
\Address{Department of Mathematics and Statistics, York University, \\ 4700 Keele Street, Toronto, ON, M3J~1P3, Canada}
\Email{\mail{akuznets@yorku.ca}, \mail{yuanm@yorku.ca}}
\URLaddress{\url{https://kuznetsovmath.ca/}}

\ArticleDates{Received February 04, 2025, in final form November 12, 2025; Published online November 24, 2025}

\Abstract{Darboux transformation of a second-order linear differential operator is a well-known technique with many applications in mathematics and physics. We study Darboux transformation from the point of view of Markov semigroups of diffusion processes. We construct the Darboux transform of a diffusion process through a combination of Doob's $h$-transform and a version of Siegmund duality. Our main result is a simple formula that connects transition probability densities of the two processes. We provide several examples of Darboux transformed diffusion processes related to Brownian motion and Ornstein--Uhlenbeck process. For these examples, we compute explicitly the transition probability density and derive its spectral representation. }

\Keywords{diffusion process; Darboux transform; Sturm--Liouville theory; Markov semigroup; Doob's transform; Siegmund duality}

\Classification{60J60; 60J35}

\section{Introduction}\label{section_intro}

Darboux transformation is a well-known technique for studying second-order linear differential operators, dating back to the late 19th century \cite{Darboux_1882}.
To introduce this transformation, consider a second-order linear differential operator
 \begin{equation}\label{def_L}
 {\mathcal L}=\frac{1}{2} \partial_y^2 - c(y),
 \end{equation}
 where $y \in (l,r)\subseteq \r$. We assume that $c(y)$ is continuous on $(l,r)$ and that $\L$ acts on $C^2$ functions on $(l,r)$. Let $h$ be a positive function on $(l,r)$ that satisfies $\L h = \lambda h$ for some $\lambda \in \r$. It is straightforward to check that the operator $\L-\lambda$ can be factorized as a product of two first-order differential operators
 \begin{equation}\label{L_factorization}
 \L-\lambda =\frac{1}{2}\D_{\frac{1}{h}} \D_h,
 \end{equation}
 where we denoted
\[
{\mathcal D}_{h}:=\partial_y-\frac{h'(y)}{h(y)}=h(y) \partial_y \frac{1}{h(y)}, \qquad
{\mathcal D}_{\frac{1}{h}}:=\partial_y+\frac{h'(y)}{h(y)}=\frac{1}{h(y)} \partial_y h(y).
\]

We now introduce the Darboux transform of $\L$ as a second-order linear differential operator~$\widetilde \L$ defined by
 \begin{equation}\label{tilde_L_factorization}
 \widetilde \L-\lambda =\frac{1}{2}\D_h \D_{\frac{1}{h}}.
 \end{equation}
Thus, the Darboux transform of a second-order linear operator is obtained by factorizing it (or more precisely, its shift by $\lambda$) as a product of two first-order differential operators and then interchanging the order of the factors. The function $h$ is called the {\it seed function} of the Darboux transformation. It is an easy exercise to verify that $\widetilde \L$ is given by
\begin{equation}\label{def_tilde_L}
\widetilde \L = \frac{1}{2} \partial_y^2 - \tilde c(y),
\end{equation}
where
\[
\tilde c(y):=c(y) - \partial_y^2 \ln h(y)=\left(\frac{h'(y)}{h(y)}\right)^2-c(y)-2\lambda.
\]
A key property of the Darboux transformation, which makes it so useful in applications, is the following intertwining relation
\begin{equation}\label{L_tilde_L_intertwining}
 \widetilde \L \D_h = \D_h \L,
\end{equation}
which follows directly from \eqref{L_factorization} and \eqref{tilde_L_factorization}. In particular, if $f$ solves $\L f=\mu f$ then
$g=\D_h f$ solves $\widetilde \L g=\mu g$. Moreover, since $\widetilde \L (1/h)=\lambda (1/h)$ (a consequence of
\eqref{tilde_L_factorization}), the Darboux transformation is invertible: starting from $\widetilde \L$ and applying the Darboux transformation with the seed function $\tilde h(y)=1/h(y)$ recovers the original operator $\L$. The Darboux transformation can also be iterated, leading to the so-called Darboux--Crum transformations (see \cite{Crum_1955} and \cite[Section 2.4]{Gomez_2020}).

There exists a vast literature on applications of the Darboux transformation in applied mathematics and physics. For example, it is used to construct exactly solvable potentials for Schr\"odinger equation \cite{Bagrov_1997} and it plays a crucial role in the theory of exceptional orthogonal polynomials \cite{Garcia_2019, Ullate_2014}. Many additional applications in integrable systems are described in~\cite{Gu_2005}
and~\cite{Matveev_1991}. However, to the best of our knowledge, the Darboux transformation has not yet been studied in the context of diffusion processes, which is the focus of the present paper.

Assuming that the functions $c(y)$ and $\tilde c(y)$ in \eqref{def_L} and \eqref{def_tilde_L} are nonnegative, we can regard the operators $\L$ and $\widetilde \L$ as the infinitesimal generators of killed Brownian motion processes $Y$ and~$\widetilde Y$ with state space $(l,r)$. When a boundary point $l$ or $r$ is non-singular for one of these processes (see Section~\ref{section_Siegmund} for a review of Feller's boundary classification), there are many different ways to construct a diffusion process with a given infinitesimal generator by specifying different boundary conditions (reflecting/killing/elastic/etc.). Instead of first fixing the boundary conditions and then studying the resulting processes, we will proceed in the opposite direction: we begin by setting a goal for the type of process we wish to construct, and then determine the boundary conditions and other ingredients needed for this construction to succeed.

What follows is an informal description of what we plan to do. Let $Y$ be a killed Brownian motion on $(l,r)$ with the Markov semigroup $\{\P_t\}_{t\ge 0}$ (where we denoted $\P_t f(y):=\e_y [f(Y_t)]$) and
with the infinitesimal generator $\L$ given by \eqref{def_L}. Suppose we have another killed Brownian motion process
$\widetilde Y$ with the Markov semigroup \smash{$\big\{\widetilde \P_t\big\}_{t\ge 0}$}
and the infinitesimal generator $\widetilde \L$ as in~\eqref{def_tilde_L}. Moreover, assume that the intertwining relationship for generators \eqref{L_tilde_L_intertwining} extends to the semigroups, so that $\widetilde \P_t \D_h = \D_h \P_t$. Our aim is to use this intertwining relationship to express the transition operators $\widetilde \P_t$ in terms of $\P_t$. One way to define operators $\widetilde \P_t$ which satisfy such intertwining relationship is via
\begin{equation}\label{def_tildeP}
 \widetilde \P_t=\D_h \P_t {\mathcal I}^{(z)}_{h},
\end{equation}
where $z$ is some fixed point in $(l,r)$ and the integral operator ${\mathcal I}^{(z)}_{h}$ is defined by
\[
{\mathcal I}^{(z)}_{h} f(y):=h(y) \int_z^y \frac{f(u)}{h(u)} \d u.
\]
 Assume that $h$ is a positive $\lambda$-invariant function for $\P_t$, meaning that it satisfies $\P_t h={\rm e}^{\lambda t} h$. This condition implies $\L h=\lambda h$. Then the operators $\widetilde \P_t$ defined in \eqref{def_tildeP} would indeed satisfy the intertwining relationship $\widetilde \P_t \D_h = \D_h \P_t$, since \smash{${\mathcal I}^{(z)}_{h} \D_h f(y)=f(y)-f(z)h(y)/h(z)$} for $C^1$ functions $f$ and $\D_h \P_t h=0$ (due to $\lambda$-invariance of $h$). A similar argument shows that the operators defined in \eqref{def_tildeP} would not depend on the choice of $z \in (l,r)$. If the semigroups
of $Y$ and~$\widetilde Y$ do satisfy \eqref{def_tildeP}, then it is easy to verify that the transition probability densities of $\widetilde Y$ and~$Y$ should be related by
\begin{equation}\label{equation_tilde_p0}
 p_t^{\widetilde Y}(x,y) \stackrel{?}{=}\frac{h(x)}{h(y)} \partial_{x}
\left[ \frac{1}{h(x)} \int_{y}^r p_t^Y(x,u) h(u) \d u \right].
\end{equation}

We add a question mark to emphasize that the above identity is, at this stage, only a hypothesis. Indeed, the discussion above is non-rigorous, and this construction of $\widetilde Y$ contains several gaps. For instance, it is not clear why the operators
\eqref{def_tildeP} would indeed define a Markov semigroup, in particular, why the right-hand side of \eqref{equation_tilde_p0} should be positive. It is also unclear whether specific boundary conditions on $Y$ and~$\widetilde Y$ are needed for \eqref{equation_tilde_p0} to hold. Nevertheless, this informal reasoning is essentially correct, as we will show in this paper. Our main result is the construction of a killed Brownian motion $\widetilde Y$ such that the transition probabilities of $Y$ and~$\widetilde Y$ satisfy an analogue of~\eqref{equation_tilde_p0} (the only modification being the addition of a constant to~$\tilde c(y)$ to ensure positivity, which introduces an extra exponential factor in \eqref{equation_tilde_p0}). We will demonstrate that~$\widetilde Y$ can be constructed in three steps, starting from a killed Brownian motion $Y$ and a~$\lambda$-invariant function $h$: apply Doob’s $h$-transform, then a version of Siegmund duality \cite{Siegmund_1976}, followed by another Doob’s $h$-transform.

The primary motivation for constructing Darboux-transformed diffusion processes is that this would greatly expand the number of diffusions for which the transition probability density can be computed in closed form. There are only a few examples (Brownian motion, Ornstein–Uhlenbeck process, Bessel process, square-root diffusion process, and several others) for which the transition probability is known in closed form, and all of these processes are very useful in applications, particularly in mathematical finance. Expanding the number of such examples is, therefore, a~worthwhile task.

The paper is organized as follows. In Section~\ref{section_preliminaries}, we recall some basic facts about diffusion processes, including Feller’s classification of boundary points for diffusion processes. In Section~\ref{section_Siegmund}, we discuss Siegmund duality \cite{Clifford_1985,Siegmund_1976} and present a new version of it (which can be applied to processes with reflecting instead of absorbing boundary conditions). Our main result in this section is Theorem~\ref{thm:main}, which establishes a connection between transition semigroups of the process and its Siegmund dual (in our new construction). This result could be of independent interest, as Siegmund duality is an important tool in the study of interacting particle systems~\mbox{\cite{Assiotis2018,Assiotis2023,Assiotis2019}}. In~Section~\ref{section_Darboux_diffusion}, we construct the Darboux transform of a killed Brownian motion and in our second main result, Theorem~\ref{thm:main2}, we show that identity \eqref{equation_tilde_p0} holds (up to an exponential factor). We~also present an example that illustrates the necessity of $\lambda$-invariance of $h$: we show that if $h$ solves $\L h=\lambda h$ but is not $\lambda$-invariant for the process $Y$, then the right-hand side of~\eqref{equation_tilde_p0} may be negative. Section~\ref{section_Examples} provides five examples of the Darboux transformation applied to various diffusion processes. In the first four examples, we start with the Brownian motion process on an interval with various boundary conditions, and in the last example we apply the Darboux transformation to the Brownian motion killed at rate $y^2/2$ (this latter process is closely related to the Ornstein–Uhlenbeck process). In all examples, we describe the boundary behavior and derive explicit formulas for the transition probability density of the transformed process. In four of these examples, we also provide the spectral representation of the transition probability density. At the end of Section~\ref{section_Examples}, we discuss connections between our results and those obtained in the physics literature on propagators for the one-dimensional Schr\"odinger equation.

\section{Preliminaries}\label{section_preliminaries}

In this section, we introduce notation and recall several basic facts about diffusion processes, which will be useful later. We will mostly follow \cite[Chapter II]{Borodin2002}; other good references are~\cite{ItoMcKean_1974,Mandl1968}.
Let $X$ be a regular diffusion process on an interval $(l,r)\subseteq \r$. We assume that the infinitesimal generator of $X$ has the form
$
{\mathcal L}_X=\frac{1}{2} \sigma^2(x) \partial_x^2 + b(x) \partial_x - c(x)$,
where the functions $b$, $c$ and $\sigma$ are continuous on $(l,r)$ and $\sigma(x)>0$ and $c(x)\ge 0$ for $x\in (l,r)$.
The operator ${\mathcal L}_X$ is acting on the class of $C^2$ functions on $(l, r)$ with appropriate boundary conditions, which will be described below. We denote by $s(x)$ the scale function of $X$ and by~$\d m(x)$ and $\d k(x)$ the speed measure and the killing measure of $X$. The derivative of $s$ and the densities of measures $\d m(x)$ and $\d k(x)$ can be found via
\begin{equation}\label{eqn:s_m_k_prime}
s'(x)={\rm e}^{-B(x)}, \qquad m'(x)=2\sigma^{-2}(x) {\rm e}^{B(x)}, \qquad k'(x)=c(x) m'(x),
\end{equation}
where $B'(x):=2 \sigma^{-2}(x) b(x)$ (see \cite[Section II.9]{Borodin2002}).
We also denote
\[
f^{\pm}(x)=\lim\limits_{h\to 0^{\pm}} \frac{f(x+h)-f(x)}{s(x+h)-s(x)}.
\]

Let us now recall Feller's boundary classification (see \cite[Section II.6]{Borodin2002}). Fix a point $z\in (l,r)$ and set
\begin{gather}\label{def:R_Q}
R(x):=(m(z)-m(x)+k(z)-k(x)) s'(x), \!\qquad Q(x):=(s(z)-s(x)) (m'(x)+k'(x)).\!\!\!
\end{gather}
The left boundary $l$ is called an {\it exit} boundary if $R\in L_1((l,z))$ and an {\it entrance} boundary if~${Q \in L_1((l,z))}$. \label{R_Q_L1} The conditions for the right boundary $r$ are the same, with the obvious change of the interval $(l,z) \mapsto (z,r)$. A boundary point that is neither entrance nor exit is called {\it natural}, while a boundary point that is both entrance and exit is called {\it non-singular}. If $l$ (or $r$) is a~non-singular boundary point for $X$, we impose one of the following boundary conditions:
\label{elastic_boundary_condition}
\begin{itemize}\itemsep=0pt
\item[(i)] {\it elastic} boundary condition: $f(l+)=\alpha f^+(l+)$ for $\alpha >0 $ (or $f(r-)=-\beta f^-(r-)$ for $\beta > 0$);
\item[(ii)] {\it reflecting} boundary condition: $f^+(l+)=0$ (or $f^-(r-)=0$);
\item[(iii)] {\it killing} boundary condition: $f(l+)=0$ (or $f(r-)=0$).
\end{itemize}
In this paper, we do not consider sticky boundary conditions or traps (in the terminology of \cite[Section II.7]{Borodin2002}) -- these involve the second derivative at a point and result in a diffusion process whose distribution has an atom at the boundary.

Let $\lambda>0$ and denote by $\psi_{\lambda}(x)$ and $\varphi_{\lambda}(x)$ the {\it fundamental solutions} to the second-order ODE $\L_X f=\lambda f$. We recall that the fundamental solutions are unique (up to multiplication by a constant) positive solutions to $\L_X f=\lambda f$ such that $\psi_{\lambda}$ is increasing and $\varphi_{\lambda}$ is decreasing
(see~\cite[Section II.10]{Borodin2002}). Also, if $l$ is a non-singular boundary point, then $\psi_{\lambda}$ must satisfy one of the boundary conditions stated above (and the same applies for $\varphi_{\lambda}$ at the right boundary point~$r$). Their Wronskian is defined as
 \begin{equation*}
\wr[\varphi_{\lambda}; \psi_{\lambda}]:=\varphi_{\lambda}(x) \psi'_{\lambda}(x)- \varphi'_{\lambda}(x)\psi_{\lambda}(x),
 \end{equation*}
 and it is known that the quantity $\omega_{\lambda}:=\wr[\varphi_{\lambda}; \psi_{\lambda}]/s'(x)$ is independent of $x$. We denote by~$q^X_t(x,y)$ the transition probability density of the process $X$ with respect to the speed measure, that is
 \[
 \p_x(X_t \in A)=\int_A q^X_t(x,y) \d m(y),
 \]
 for all Borel sets $ A \subset (l,r)$.
 The function $q^X_t(x,y)$ is symmetric in $x$ and $y$ (see \cite[Section~II.4]{Borodin2002}). The Green's function of the process $X$ is given in terms of the fundamental solutions $\psi_{\lambda}$ and~$\varphi_{\lambda}$~by
\begin{gather}\label{Green_X}
 G^X_{\lambda}(x,y):=
 \int_0^{\infty} {\rm e}^{-\lambda t} q^X_t(x,y) \d t=
 \begin{cases}
 \omega_{\lambda}^{-1} \psi_{\lambda}(x) \varphi_{\lambda}(y) & \text{if } x\le y, \\
 \omega_{\lambda}^{-1} \psi_{\lambda}(y) \varphi_{\lambda}(x)  & \text{if } y\le x,
 \end{cases}
\end{gather}
as stated in \cite[Section II.11]{Borodin2002}. The transition probability density with respect to Lebesgue measure will be denoted by
\smash{$
p_t^X(x,y):=q^X_t(x,y) m'(y)$}.

The above formula \eqref{Green_X} for the Green's function provides one possible way of finding the transition probability density of a diffusion process $X$ with specified boundary behaviour: the Green's function is the Laplace transform in the $t$-variable of $q_{t}^X(x,y)$ and thus uniquely determines the transition probability density. Conversely, given the transition probability density $q_t^{X}(x,y)$ of a~diffusion process, we can determine the boundary behaviour of $X$ at each non-singular boundary by first finding the Green's function, then determining the fundamental solutions $\psi_{\lambda}$ and $\varphi_{\lambda}$ from~\eqref{Green_X}, and finally studying the boundary behaviour of the fundamental solutions. For example, if $l$ is a non-singular boundary and we find that $\psi_{\lambda}^+(l+)=0$ ($\psi_{\lambda}(l+)=0$) for all~${\lambda>0}$, then $X$ has a reflecting (respectively, killing) boundary condition at $l$.

We also note the following fact: if a process $X=\{X_t\}_{t\ge 0}$ is mapped into a process $Y=\{y(X_t)\}_{t\ge 0}$ via an increasing differentiable function $y=y(x)$, then the scale function and speed measure are transformed as follows
$
s_Y(y(x))=s_X(x)$, $ m_Y'(y(x))=m_X'(x)/y'(x)$.
The state space for the process $Y$ is the interval $(y(l), y(r))$, and the boundary behavior of $Y$ at points~$y(l)$ and $y(r)$ is the same as the boundary behavior of $X$ at $l$ and $r$.

 \section{Siegmund duality}\label{section_Siegmund}

Let $Z$ and $\widetilde Z$ be two Markov processes on $[0,\infty)$. We say that $\widetilde Z$ is a Siegmund dual of $Z$ if
\begin{equation}\label{Siegmund1}
\p_y(Z_t \ge x)=\p_x\bigl(\widetilde Z_t\le y\bigr),
\end{equation}
for all $x,y,t \ge 0$. This concept was introduced by Siegmund \cite{Siegmund_1976} in 1976 and was later studied in \cite{Clifford_1985,Cox_1984,Kolokoltsov_2011}. Diffusion processes satisfying \eqref{Siegmund1} were called conjugate diffusions in \cite{Toth_1996}. The existence of a dual process $\widetilde Z$ requires the process $Z$ to be {\it stochastically monotone}, see \cite{Clifford_1985,Siegmund_1976}. The latter property can be stated as follows: for any $t> 0$ and $x>0$ the function $y\mapsto \p_y(Z_t \ge x)$ is non-decreasing.

Siegmund duality is applied when one of the processes has an absorbing boundary. For example, assuming that $Z$ is conservative and taking the limit of~\eqref{Siegmund1} as $x \to 0^+$, we see that the process $\widetilde Z$ must have an absorbing boundary at zero. Thus, in this case, the distribution of~$\widetilde Z_t$ will have an atom at zero, and assuming that this distribution has a density on $(0,\infty)$, it must be given by
\smash{$p_t^{\widetilde Z}(x,y)=\partial_y \p_y(Z_t \ge x)$},
which follows from \eqref{Siegmund1} by differentiation with respect to $y$.

 The following theorem is our main result in this section: we construct a version of Siegmund duality that applies to processes with reflecting or killing boundary conditions.

\begin{Theorem}\label{thm:main}${}$
Assume that $(l,r) \subseteq \r$, $b(x) \in C((l,r))$, $\sigma(x)\in C^1((l,r))$ and $\sigma(x)>0$ for~${x \in (l,r)}$. Let $X$ be a conservative diffusion process on $(l,r)$ with infinitesimal generator
 \[
 \L_X=\frac{1}{2} \sigma^2(x) \partial_x^2+b(x) \partial_x,
 \]
 and reflecting boundary conditions at every non-singular boundary point. Then there exists a~diffusion process $\widetilde X$ such that
\begin{equation}\label{eqn:main}
\p_{x_1}(X_t\le y)=\p_{x_2}(X_t\le y)+\p_y\bigl(x_1<\widetilde X_t \le x_2\bigr),
\end{equation}
for all $t>0$ and $x_i,y \in (l,r)$ for which $x_1<x_2$. The process $\widetilde X$ has infinitesimal generator
\begin{equation}\label{def_tilde_L2}
\L_{\widetilde X}=\frac{1}{2} \sigma^2(x) \partial_x^2+(\sigma(x)\sigma'(x) - b(x) )\partial_x
\end{equation}
and killing boundary condition at every non-singular boundary point.
\end{Theorem}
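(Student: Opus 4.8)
The plan is to exhibit $\widetilde X$ explicitly as the diffusion obtained from $X$ by interchanging its scale function and speed measure, to translate the boundary data and the fundamental solutions of $\L_X f=\lambda f$ into those of $\L_{\widetilde X} f=\lambda f$, and then to verify \eqref{eqn:main} by taking Laplace transforms in $t$ and comparing Green's functions via \eqref{Green_X}. Write $s_X,m_X$ for the scale function and speed measure density of $X$, so that $\L_X=\frac{1}{m_X'}\partial_x\frac{1}{s_X'}\partial_x$, and let $\widetilde X$ be the regular diffusion on $(l,r)$ with $s_{\widetilde X}':=m_X'$, $m_{\widetilde X}':=s_X'$, zero killing measure in $(l,r)$, and the killing boundary condition at every non-singular boundary point. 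A short computation with \eqref{eqn:s_m_k_prime} (taking $c\equiv 0$) shows that $\L_{\widetilde X}=\frac{1}{s_X'}\partial_x\frac{1}{m_X'}\partial_x$ coincides with the operator $\L_{\widetilde X}$ in \eqref{def_tilde_L2}. Since $X$ has no interior killing, the functions $R$ and $Q$ of \eqref{def:R_Q} for $\widetilde X$ are those for $X$ with $s_X$ and $m_X$ swapped, so a boundary point is natural (respectively non-singular) for $\widetilde X$ exactly when it is natural (respectively non-singular) for $X$, while an entrance boundary of $X$ corresponds to an exit boundary of $\widetilde X$. In particular, since the conservative process $X$ has no exit boundary, $\widetilde X$ has no entrance boundary, and the killing condition is imposed precisely at the non-singular boundaries of~$\widetilde X$.

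From the two factorizations above one checks directly that $\L_X f=\lambda f$ implies $\L_{\widetilde X} g=\lambda g$ for $g:=f'/s_X'$. Applied to the fundamental solutions $\psi_\lambda,\varphi_\lambda$ of $\L_X f=\lambda f$, this gives, up to normalization,
\[
\psi^{\widetilde X}_\lambda=\frac{\psi_\lambda'}{s_X'},\qquad \varphi^{\widetilde X}_\lambda=-\frac{\varphi_\lambda'}{s_X'};
\]
since $\bigl(\psi^{\widetilde X}_\lambda\bigr)'=\lambda\psi_\lambda m_X'>0$ and $\bigl(\varphi^{\widetilde X}_\lambda\bigr)'=-\lambda\varphi_\lambda m_X'<0$, these are positive and respectively increasing and decreasing, as fundamental solutions for $\widetilde X$ must be. At a non-singular boundary, the reflecting condition $\psi_\lambda^+=0$ (or $\varphi_\lambda^-=0$) for $X$ says precisely that $\psi^{\widetilde X}_\lambda$ (respectively $\varphi^{\widetilde X}_\lambda$) vanishes there, that is, $\widetilde X$ satisfies the killing condition there, in agreement with the construction above; and a direct computation gives $\omega^{\widetilde X}_\lambda=\wr[\varphi^{\widetilde X}_\lambda;\psi^{\widetilde X}_\lambda]/s_{\widetilde X}'=\lambda\,\omega_\lambda$.

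Because $X$ is conservative it has no exit boundary, hence $\psi_\lambda^+(l+)=0$ and $\varphi_\lambda^-(r-)=0$; from $\L_X\psi_\lambda=\lambda\psi_\lambda$ and $\L_X\varphi_\lambda=\lambda\varphi_\lambda$ this yields
\[
\int_l^x\psi_\lambda\,\d m_X=\frac{\psi_\lambda'(x)}{\lambda\, s_X'(x)},\qquad \int_x^r\varphi_\lambda\,\d m_X=-\frac{\varphi_\lambda'(x)}{\lambda\, s_X'(x)}.
\]
Using \eqref{Green_X} for both processes, these two identities, the relation $\wr[\varphi_\lambda;\psi_\lambda]=\omega_\lambda s_X'$, and the formulas for $\psi^{\widetilde X}_\lambda,\varphi^{\widetilde X}_\lambda,\omega^{\widetilde X}_\lambda$ obtained above, one computes, for $x_1<x_2$,
\[
\int_{x_1}^{x_2} G^{\widetilde X}_\lambda(y,v)\,m_{\widetilde X}'(v)\,\d v=\int_l^y\bigl(G^X_\lambda(x_1,u)-G^X_\lambda(x_2,u)\bigr)m_X'(u)\,\d u.
\]
By \eqref{Green_X} and Fubini, the left-hand side is the Laplace transform in $t$ of $\p_y(x_1<\widetilde X_t\le x_2)$, and the right-hand side is that of $\p_{x_1}(X_t\le y)-\p_{x_2}(X_t\le y)$; since both of these functions of $t$ are continuous, uniqueness of the Laplace transform gives \eqref{eqn:main}.

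I expect the boundary analysis to be the main obstacle: one must verify that interchanging scale and speed really yields the \emph{killing} condition---rather than a reflecting or a sticky one---at non-singular boundary points, and that the $R\leftrightarrow Q$ correspondence for Feller's classification behaves as claimed in every case; this is also exactly where conservativeness of $X$ (the absence of an exit boundary) enters, through the identities $\psi_\lambda^+(l+)=0$ and $\varphi_\lambda^-(r-)=0$ used in the resolvent computation. An alternative to that computation is to verify directly that $p^{\widetilde X}_t(y,v):=-\partial_v\,\p_v(X_t\le y)$ solves the forward Kolmogorov equation $\partial_t p=\L_{\widetilde X}^{\ast}p$ in the variable $v$, has the correct $t\downarrow 0$ limit and boundary behaviour, and is nonnegative by stochastic monotonicity of the one-dimensional diffusion $X$; integrating this identity over $v\in(x_1,x_2]$ then recovers \eqref{eqn:main}.
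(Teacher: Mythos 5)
Your proposal is correct and follows essentially the same route as the paper: define $\widetilde X$ by interchanging scale and speed, identify the fundamental solutions of $\L_{\widetilde X}f=\lambda f$ as $\psi_\lambda'/s_X'$ and $-\varphi_\lambda'/s_X'$ together with the Wronskian relation $\omega^{\widetilde X}_\lambda=\lambda\omega_\lambda$, and verify the duality identity by comparing Green's functions after a Laplace transform in $t$, using $\psi^+_\lambda(l+)=0$ and $\varphi^-_\lambda(r-)=0$. The only (cosmetic) difference is that the paper first reduces to the driftless, natural-scale case and then transfers the result via $x\mapsto s(x)$, whereas you carry out the resolvent computation directly in general scale.
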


\begin{proof}
We denote $\tilde b(x):=\sigma(x)\sigma'(x) - b(x)$ and define the process $\widetilde X$ as the solution to the stochastic differential equation (SDE)
$
\d \widetilde X_t=\tilde b\bigl(\widetilde X_t\bigr) \d t+\sigma\bigl(\widetilde X_t\bigr) \d W_t$,
which is killed at the first exit from $(l,r)$. The existence of a unique in law weak solution of this SDE (up to the first exit from $(l,r)$) is guaranteed by \cite[Theorem 5.15]{Karatzas_Shreve_1988}.

Let $s(x)$ and $m(x)$ denote the scale function and the speed measure of $X$. Using \eqref{eqn:s_m_k_prime}, we check that
 \begin{equation}\label{L_tilde_L_speed_scale}
 \L_X=\frac{\d}{\d m(x)}\frac{\d}{\d s(x)}, \qquad
 \L_{\widetilde X}=\frac{\d}{\d s(x)}\frac{\d}{\d m(x)},
 \end{equation}
thus the speed measure/scale function of $\widetilde X$ is equal to the scale function/speed measure of $X$ (see \cite[Section 4]{Cox_1984} for a similar construction in the case of classical Siegmund duality).

The relation \eqref{L_tilde_L_speed_scale} imposes restrictions on the boundary behavior of $X$ and $\widetilde X$. The type of the boundary behavior depends on the functions $R$ and $Q$ defined via \eqref{def:R_Q}, and when we compute their analogues $\widetilde R$ and $\widetilde Q$ for the process $\widetilde X$ we find that
 $\widetilde Q\equiv R$ and $\widetilde R \equiv Q$ (because the killing measure is zero and
 $m_X(s)=s_{\widetilde X}(s)$ and $s_X(x)=m_{\widetilde X}(x)$).
 Let us focus on the left boundary point $l$ (the same considerations apply to the right boundary $r$). Our assumption that~$X$ is conservative implies that $l$ can not be an exit-not-entrance boundary for~$X$, thus it can not be an entrance-not-exit boundary for $\widetilde X$. From the boundary classification presented on page \pageref{def:R_Q}, it follows that
\begin{itemize}\itemsep=0pt
\item[(i)] if $l$ is a non-singular boundary for $X$, then it is also a non-singular boundary for $\widetilde X$, and in this case we have a reflecting (killing) boundary condition for $X$ (respectively, $\widetilde X$);
\item[(ii)] if $l$ is a natural boundary for $X$, then it stays a natural boundary for $\widetilde X$;
\item[(iii)] if $l$ is an entrance-not-exit, it becomes an exit-not-entrance boundary for $\widetilde X$ (consistent with our definition of $\widetilde X$ as killed on the first exit from $(l,r)$).
\end{itemize}

We first establish this result in the special case $b(x) \equiv 0$. In this case, the process $X$ is in natural scale, i.e., $s_X(x) \equiv x$. Let $\lambda>0$ and let $\psi_{\lambda}$ and $\varphi_{\lambda}$ be the fundamental solutions to~${\L_X f=\lambda f}$. We claim that the fundamental solutions to $\L_{\widetilde X} f= \lambda f$ are given by
\begin{equation}\label{eqn:tilde_psi_phi}
\tilde \psi_{\lambda}(x)=\psi'_{\lambda}(x), \qquad
\tilde \varphi_{\lambda}(x)=-\varphi'_{\lambda}(x).
\end{equation}
To prove this, we note first that if $f$ is solves $\L_X f=\lambda f$, then $\tilde f=f'$ solves $\L_{\widetilde X} \tilde f=\lambda \tilde f$, due to the intertwining relation
\[
\frac{\d}{\d s(x)} \L_X = \L_{\widetilde X}  \frac{\d}{\d s(x)},
\]
which follows immediately from \eqref{L_tilde_L_speed_scale}.
 It is clear that $\tilde \psi_{\lambda}$ and $\tilde \varphi_{\lambda}$ are positive on $(l,r)$ (since~$\psi_{\lambda}$ is increasing
and $\varphi_{\lambda}$ is decreasing). Next, since $\lambda>0$ and $\psi_{\lambda}$ is a positive solution to $\tfrac{1}{2} \sigma^2(x) f''(x)=\lambda f(x)$, we have $\psi''_{\lambda}(x)>0$, so \smash{$\tilde \psi_{\lambda}=\psi'_{\lambda}$} is increasing. The same argument shows that $\tilde \varphi_{\lambda}$ is decreasing. According to
\cite[Section II.10]{Borodin2002}, in each case (natural, entrance-not-exit, non-singular reflecting boundary) we have
$\psi'(l+)=0$, which implies the correct killing boundary condition~${\tilde \psi_{\lambda}(l+)=0}$. The same considerations apply to the boundary condition $\tilde \varphi_{\lambda}$ at the right boundary $r$. This ends the proof of \eqref{eqn:tilde_psi_phi}.

Next, we check that if $\omega_{\lambda}=\wr[\varphi_{\lambda}; \psi_{\lambda}]$, then
\begin{equation}\label{new_Wronskian}
\wr\big[\tilde \varphi_{\lambda}; \tilde \psi_{\lambda}\big]=
-\varphi'_{\lambda}(x) \psi''_{\lambda}(x)+\varphi''_{\lambda}(x)\psi'_{\lambda}(x)
=\frac{2\lambda}{\sigma^2(x)} \wr[\psi_{\lambda}; \varphi_{\lambda}]=\lambda \omega_{\lambda} s_{\widetilde X}'(x),
\end{equation}
where in the second step we used the fact that $\psi_{\lambda}$ and $\varphi_{\lambda}$ are solutions to $\tfrac{1}{2} \sigma^2(x) f''(x)=\lambda f(x)$.
Using \eqref{Green_X} and \eqref{new_Wronskian} we can write the Green's function of the process \smash{$\widetilde X$} as
\begin{equation}\label{Green_tilde_X}
 G^{\widetilde X}_{\lambda}(x,y)=
 \begin{cases}
 -(\lambda \omega_{\lambda})^{-1} \psi'_{\lambda}(x) \varphi'_{\lambda}(y)  & {\text{if }} x\le y, \\
 -(\lambda \omega_{\lambda})^{-1} \psi'_{\lambda}(y) \varphi'_{\lambda}(x)  & {\text{if }} y\le x.
 \end{cases}
\end{equation}

Now we have all the ingredients to complete the proof of \eqref{eqn:main}.
We rewrite
\eqref{eqn:main} in the form
\begin{gather}\label{eqn:main3}
\int_l^y p^X_t(x_1,u) \d u=\int_l^y p^X_t(x_2,u) \d u+\int_{x_1}^{x_2} p^{\widetilde X}_t(y,v) \d v.
\end{gather}
We need to prove that \eqref{eqn:main3} holds for all $t>0$ and $x_1$, $x_2$, $y$ in $(l,r)$ with $x_1<x_2$. We take~${\lambda>0}$, multiply both sides of
\eqref{eqn:main3} by $\exp(-\lambda t)$ and integrate in $t$ over $(0,\infty)$. After applying Fubini's theorem, we obtain
\begin{equation}\label{eqn:proof_main2_1}
 2 \int_l^y G^X_{\lambda}(x_1,u)
 \sigma^{-2}(u)\d u
 = 2 \int_l^y G^X_{\lambda}(x_2,u)
 \sigma^{-2}(u)\d u +\int_{x_1}^{x_2} G^{\widetilde X}_{\lambda}(y,v) \d v.
\end{equation}
By the uniqueness of the Laplace transform, to prove \eqref{eqn:main3}, it is enough to show that \eqref{eqn:proof_main2_1} holds for all $\lambda>0$ and $x_1,x_2,y \in (l,r)$ such that $x_1<x_2$. Identity \eqref{eqn:proof_main2_1} is true if
\begin{equation}\label{eqn:proof_main2_2}
 2 \partial_x \int_l^y G^X_{\lambda}(x,u) \sigma^{-2}(u)\d u + G^{\widetilde X}_{\lambda}(x,y)=0,
\end{equation}
for $x,y \in (l,r)$. We first prove that \eqref{eqn:proof_main2_2} holds when $l<x<y$. Using \eqref{Green_X} and \eqref{Green_tilde_X},
we rewrite \eqref{eqn:proof_main2_2} in the form
\begin{gather}
\frac{2}{\omega_{\lambda}}
\partial_x \left[ \varphi_{\lambda}(x) \int_l^x \psi_{\lambda}(u) \sigma^{-2}(u) \d u+ \psi_{\lambda}(x) \int_x^y \varphi_{\lambda}(u) \sigma^{-2}(u) \d u \right]
\nonumber\\
\qquad- \frac{1}{\lambda \omega_{\lambda}} \psi'_{\lambda}(x) \varphi'_{\lambda}(y)=0.\label{eqn:proof_main2_3}
\end{gather}
Since $\psi_{\lambda}$ solves $\tfrac{1}{2} \sigma^2(x) f''(x)=\lambda f(x)$, we have
\[
2\int_l^x \psi_{\lambda}(u) \sigma^{-2}(u) \d u=\frac{1}{\lambda}
\int_l^x \psi''_{\lambda}(u) \d u=\frac{1}{\lambda} \psi'_{\lambda}(x),
\]
where in the last step we used $\psi'_{\lambda}(l+)=0$ established above. Similarly,
\[
2 \int_x^y \varphi_{\lambda}(u) \sigma^{-2}(u) \d u=\frac{1}{\lambda}
\bigl( \varphi'_{\lambda}(y)-\varphi'_{\lambda}(x)\bigr).
\]
With the help of the above two equations, \eqref{eqn:proof_main2_3} becomes
\[
\frac{1}{\lambda \omega_{\lambda}}
\partial_x \bigl( \wr[\varphi_{\lambda}; \psi_{\lambda}] +
\psi_{\lambda}(x) \varphi'_{\lambda}(y) \bigr) - \frac{1}{\lambda \omega_{\lambda}}
\psi'_{\lambda}(x) \varphi'_{\lambda}(y)=0,
\]
and this identity is true since $\wr[\varphi_{\lambda}; \psi_{\lambda}]=\omega_{\lambda}$ does not depend on $x$. Thus, we have proved~\eqref{eqn:proof_main2_2} when $l<x<y$. The proof that \eqref{eqn:proof_main2_2} holds when $l<y< x$ is done in the same way, and the details are omitted.

This ends the proof of \eqref{eqn:main} in the driftless case $b(x)\equiv 0$. The general case follows by considering the driftless process $Y=\{s(X_t)\}_{t\ge 0}$, constructing the transformed process $\widetilde Y$, and defining \smash{$\widetilde X_t=s^{-1}\bigl(\widetilde Y_t\bigr)$}. The fact that
identity \eqref{eqn:main} holds for processes $Y$ and $\widetilde Y$ implies that it also holds for $X$ and $\widetilde X$. We leave it as an exercise to check that the process $\widetilde X$ thus constructed has the infinitesimal generator as in
\eqref{def_tilde_L2}.
\end{proof}

Identity \eqref{eqn:main} is equivalent to saying that for any fixed $t>0$ and $y \in (l,r)$ the function
\begin{equation}\label{const_function}
x\mapsto \p_x (X_t\le y) - \p_y\bigl(\widetilde X_t>x\bigr)
\end{equation}
is constant (i.e., does not depend on $x$). Note that this is very similar to classical Siegmund duality \eqref{Siegmund1}, where this constant is equal to zero. Taking derivatives in $x$ of \eqref{const_function} and relabelling variables $x\leftrightarrow y$, we obtain the following result.
\begin{Corollary}\label{corollary_main}
The transition probability densities of $X$ and $\widetilde X$ satisfy
\begin{equation}\label{eqn:main2}
p^{\widetilde X}_t(x,y)=\partial_y \p_y(X_t>x)=\partial_y \int_x^r p^X_t(y,u) \d u,
\end{equation}
for $t>0$ and $x,y\in (l,r)$.
\end{Corollary}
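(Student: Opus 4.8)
The plan is to derive Corollary~\ref{corollary_main} directly from Theorem~\ref{thm:main}, treating identity~\eqref{eqn:main} as the sole input. First I would observe that~\eqref{eqn:main} can be rearranged as
\begin{equation*}
\p_{x_1}(X_t\le y) - \p_y\bigl(\widetilde X_t > x_1\bigr) = \p_{x_2}(X_t\le y) - \p_y\bigl(\widetilde X_t > x_2\bigr),
\end{equation*}
where I have used $\p_y\bigl(x_1 < \widetilde X_t \le x_2\bigr) = \p_y\bigl(\widetilde X_t > x_1\bigr) - \p_y\bigl(\widetilde X_t > x_2\bigr)$ (valid since $x_1 < x_2$). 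This is exactly the assertion that the map in~\eqref{const_function} is constant in $x$; the paper has already drawn this conclusion in the lines immediately preceding the corollary, so I would cite it rather than repeat it.

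Next I would differentiate the constant function~\eqref{const_function} with respect to $x$ to obtain
\begin{equation*}
\partial_x\,\p_x(X_t \le y) - \partial_x\,\p_y\bigl(\widetilde X_t > x\bigr) = 0.
\end{equation*}
The first term is $p^X_t(x,y)$ (the transition density with respect to Lebesgue measure, in the first spatial variable), and the second term is $-\partial_x\,\p_y\bigl(\widetilde X_t \le x\bigr)$, whose negative is $p^{\widetilde X}_t(y,x)$ by differentiating the distribution function of $\widetilde X_t$ started from $y$. Hence $p^X_t(x,y) = p^{\widetilde X}_t(y,x)$, and relabelling $x \leftrightarrow y$ gives $p^{\widetilde X}_t(x,y) = p^X_t(y,x) = \partial_y\,\p_y(X_t > x)$, which is the claimed formula; writing $\p_y(X_t > x) = \int_x^r p^X_t(y,u)\,\d u$ yields the final expression in~\eqref{eqn:main2}.

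The one point requiring a little care — and the main obstacle if one wants full rigour — is the legitimacy of the differentiation: one must know that the distribution functions $x\mapsto \p_x(X_t\le y)$ and $x\mapsto \p_y\bigl(\widetilde X_t\le x\bigr)$ are differentiable, with derivatives equal to the respective transition densities. For $\widetilde X$ this is the standard fact that a diffusion on $(l,r)$ with the regularity assumed in Theorem~\ref{thm:main} has a transition density (with respect to Lebesgue measure) that is the $x$-derivative of its distribution function; for $X$ one uses in addition that $X$ is conservative and reflected, so $\p_x(X_t\le y)$ has no atoms in $y$ and differentiating in the other variable is equally justified. Since existence and smoothness of transition densities for such one-dimensional diffusions is classical (e.g.\ \cite[Chapter II]{Borodin2002}), I would simply invoke it. Everything else is the elementary manipulation of the constancy statement already established, so the corollary follows with essentially no further work.
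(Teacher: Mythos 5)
Your overall strategy (rearrange \eqref{eqn:main} into the constancy of \eqref{const_function}, then differentiate in $x$) is exactly the paper's, but the execution contains a genuine error at the key identification step. You claim that the first term $\partial_x\,\p_x(X_t\le y)$ equals $p^X_t(x,y)$. This is false: here $x$ is the \emph{starting point} of $X$, and the transition density is obtained by differentiating the distribution function in the \emph{terminal} variable, i.e., $p^X_t(x,y)=\partial_y\,\p_x(X_t\le y)$, not $\partial_x\,\p_x(X_t\le y)$. Indeed, $x\mapsto\p_x(X_t\le y)$ is non-increasing (stochastic monotonicity of a one-dimensional diffusion), so its $x$-derivative is $\le 0$ and cannot be a probability density. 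The false identification propagates: your intermediate conclusion $p^X_t(x,y)=p^{\widetilde X}_t(y,x)$ is not true (the two processes have different generators and boundary conditions, and no such pointwise equality of densities holds), and the last equality in your chain, $p^X_t(y,x)=\partial_y\,\p_y(X_t>x)$, is false for the same reason --- it is only because this second error undoes the first that you land on the correct final formula.

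The correct argument keeps the derivative of $\p_x(X_t\le y)$ in the starting variable as an unidentified object and uses the constancy to equate it with the genuine density of $\widetilde X$: differentiating \eqref{const_function} gives $\partial_x\,\p_x(X_t>y)=-\partial_x\,\p_x(X_t\le y)=-\partial_x\,\p_y\bigl(\widetilde X_t>x\bigr)=p^{\widetilde X}_t(y,x)$, where the last step is legitimate because there $x$ \emph{is} the terminal variable of $\widetilde X$. Relabelling $x\leftrightarrow y$ then yields \eqref{eqn:main2}. Note that this also resolves your differentiability worry for $X$: the $x$-differentiability of $\p_x(X_t\le y)$ need not be assumed separately, since by the constancy it differs from $\p_y\bigl(\widetilde X_t>x\bigr)$ by a quantity independent of $x$, and the latter is differentiable in $x$ because $\widetilde X_t$ has a density. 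The whole point of the Siegmund-type duality is precisely that the derivative of the distribution function of $X$ in its \emph{initial} point is the density of the \emph{dual} process, not of $X$ itself.
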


\section{Darboux transform of killed Brownian motion}\label{section_Darboux_diffusion}

We start with a diffusion process $Y$ on the interval $(l,r) \subseteq \r$, which is a Brownian motion killed at rate $c(y)$, where $c(y)$ is a nonnegative continuous function for $y\in (l,r)$. The infinitesimal generator is $\L_Y = \frac{1}{2} \partial_y^2 -c(y)$. We take the speed measure to be the Lebesgue measure~${\d m(x)=\d x}$, in this case the scale function becomes $s(x)=2x$ (see \eqref{eqn:s_m_k_prime}). If $l=-\infty$, then $l$ is a~natural boundary. If $l$ is finite, then the boundary classification conditions $R \in L_1((l,z))$ and~${Q \in L_1((l,z))}$ from page \pageref{R_Q_L1} are equivalent to
\[
R \in L_1((l,z)) \Leftrightarrow \int_l^z c(y) (y-l) \d y < \infty\qquad  \text{and}\qquad
Q \in L_1((l,z)) \Leftrightarrow \int_l^z c(y) \d y < \infty.
\]
Thus, if a boundary is an entrance, it must also be an exit, and we have the following three possibilities for boundary behavior of the process $Y$: natural, exit-not-entrance and non-singular (entrance-not-exit boundary is not possible for a killed Brownian motion). We will only consider killed Brownian motion processes that have one of elastic/reflecting/killing boundary conditions at each non-singular boundary point (see the discussion on page \pageref{elastic_boundary_condition}).

Next, we assume that for some $\lambda \in \r$ we have found a function $h \colon (l,r) \mapsto (0,\infty)$ that is $\lambda$-invariant for the process $Y$, that is
\begin{equation}\label{h_lambda_invariant}
\e_x[h(Y_t)]={\rm e}^{\lambda t} h(x), \qquad x\in (l,r), \quad t>0.
\end{equation}
This $\lambda$-invariant function $h$ must satisfy the following properties:
\begin{itemize}\itemsep=0pt
\item[(i)] $h$ is a solution to $\L_Y h = \lambda h$ on $(l,r)$;
\item[(ii)]
$h$ satisfies appropriate boundary conditions at each non-singular boundary point.
\end{itemize}
The first property follows from the fact that the process $\exp(-\lambda t) h(Y_t)$ is a martingale. To check the second property, we take $\mu> \max(0,\lambda)$, multiply both sides of~\eqref{h_lambda_invariant} by ${\rm e}^{-\mu t}$, integrate in~${t \in (0,\infty)}$ and obtain
\begin{equation}\label{h_lambda_invariant2}
(\mu - \lambda) \int_l^r G^Y_{\mu}(x,y) h(y) \d y= h(x),
\end{equation}
where $G^Y_{\mu}(x,y)$ is the Green's function of the process $Y$. Thus, if $h$ is bounded on $(l,r)$ (which happens, for example, if both boundaries are non-singular), it must be in the domain of the infinitesimal generator $\L_Y$ (because it lies in the image of the resolvent operator). Therefore, $h$ must satisfy the appropriate boundary conditions at each non-singular boundary (see \cite[Section~II.7]{Borodin2002}). The proof of~(ii) in the general case, when $h$ can be unbounded, is given in Appendix~\ref{AppendixB}.

In what follows, we will require the following.

\begin{Assumption}\label{Assumption_1}
The constant
\begin{equation}\label{def_m_h}
{\mathfrak m}_h:= \sup\limits_{y\in (l,r)} \left[ c(y)- \left(\frac{h'(y)}{h(y)}\right)^2 \right]
\end{equation}
is finite.
\end{Assumption}

\begin{Remark}
 Note that ${\mathfrak m}_h$ is finite when $c(y)$ is bounded on $(l,r)$. In all our examples in Section~\ref{section_Examples}, we actually have ${\mathfrak m}_h=0$, though it is easy to construct examples with ${\mathfrak m}_h$ equal to any positive number (for instance, one could take Brownian motion on $\r$ killed at a positive constant rate and a
 $\lambda$-invariant function $h(y)=\cosh(y)$, see Section~\ref{example1}). It may be true that~${\mathfrak m}_h$ is always finite, but we could not prove this or find a counterexample.
\end{Remark}

As the first step of constructing the Darboux transform of $Y$, we introduce a new Markov process $X$ via Doob's $h$-transform
\begin{equation}\label{X_as_Doobs_transform}
\p_x (X_t \in A)=\frac{{\rm e}^{-\lambda t}}{h(x)}
\e_x[ {\mathbf 1}_{\{Y_t \in A\}} h(Y_t)].
\end{equation}
The process $X$ is a conservative Markov process, since $h$ is $\lambda$-invariant for $Y$. Moreover, it is a~diffusion process on $(l,r)$ with infinitesimal generator
\begin{equation}\label{eqn:L_X}
\L_X=\frac{1}{h} \L_Y h-\lambda=\frac{1}{2} \partial_y^2 + \frac{h'(y)}{h(y)} \partial_y.
\end{equation}
The derivatives of the scale function and the speed measure of the process $X$ are $s'(x)=2h^{-2}(x)$ and $m'(x)=h^2(x)$ (this follows from \eqref{eqn:s_m_k_prime}), thus the functions $R$ and $Q$ in \eqref{def:R_Q} are given by
\begin{equation}\label{R_X_Q_X}
R_X(x)=2 h^{-2}(x) \int_x^z h^2(y) \d y, \qquad
Q_X(x)=2 h^{2}(x) \int_x^z h^{-2}(y) \d y.
\end{equation}
The functions $R_X$ and $Q_X$ can be used to find the type of the boundary points $l$ and $r$ for the process $X$ (see Section~\ref{section_preliminaries}).

To proceed, we will require the following.
\begin{Assumption}\label{Assumption_2}
The process $X$ has reflecting boundary condition at each non-singular boundary point.
\end{Assumption}

The next proposition shows that the above assumption is satisfied whenever $Y$ has a non-singular boundary with a reflecting or elastic boundary condition. This result will be useful for our examples in Section~\ref{section_Examples}.

\begin{Proposition}\label{prop_reflecting}
If $l$ is a non-singular boundary point of $Y$ with a reflecting or elastic boundary condition, then $h(l+)>0$ and $l$ is also
non-singular for $X$ with a reflecting boundary condition. The same statement applies to the right boundary point $r$.
\end{Proposition}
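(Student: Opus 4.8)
The plan is to deal with the left boundary point $l$ (the argument at the right boundary $r$ is identical) and to carry out three tasks: first, show that $h$ extends to a $C^1$ function on $[l,r)$ with $h(l+)>0$; second, deduce from \eqref{R_X_Q_X} that $l$ is non-singular for $X$; third, read off from the Green's function that the boundary condition of $X$ at $l$ is reflecting.

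\emph{Regularity and positivity of $h$ at $l$.} A non-singular boundary of $Y$ is necessarily finite, and, as recalled at the start of this section, its non-singularity amounts to $c\in L_1((l,z))$. By property~(i) of the $\lambda$-invariant function, $h$ solves $\L_Y h=\lambda h$, i.e., $h''=2(\lambda+c)h$, a linear ODE whose coefficient lies in $L_1$ near $l$; hence $h$ and $h'$ extend continuously to $l$ (this follows from a Gr\"onwall estimate using $c\in L_1((l,z))$, and is in any case built into property~(ii), which asserts that $h$ satisfies either $h^+(l+)=0$ or $h(l+)=\alpha h^+(l+)$ with $\alpha>0$). Clearly $h(l+)\ge 0$. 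To exclude $h(l+)=0$: in either boundary-condition case $h(l+)=0$ forces also $h'(l+)=0$ (in the elastic case because $\alpha>0$ and $h^+(l+)=\tfrac12 h'(l+)$), and then $h$ is a solution of the linear ODE $h''=2(\lambda+c)h$ with vanishing Cauchy data at $l$ and an $L_1$ coefficient near $l$, so $h\equiv 0$ by uniqueness of the Cauchy problem --- contradicting $h>0$ on $(l,r)$. Thus $h(l+)\in(0,\infty)$.

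\emph{Non-singularity and boundary condition for $X$.} Since $h$ is continuous and strictly positive on $[l,z]$, both $h^2$ and $h^{-2}$ are bounded there, so \eqref{R_X_Q_X} shows that $R_X$ and $Q_X$ are bounded on $(l,z)$ and hence lie in $L_1((l,z))$; therefore $l$ is both an exit and an entrance boundary for $X$, i.e., non-singular. To identify the boundary type, observe that the Doob $h$-transform \eqref{X_as_Doobs_transform}, together with $m_Y'(y)\equiv 1$ and $m_X'(y)=h^2(y)$, gives $q_t^X(x,y)={\rm e}^{-\lambda t}q_t^Y(x,y)/(h(x)h(y))$, whence $G^X_\mu(x,y)=G^Y_{\mu+\lambda}(x,y)/(h(x)h(y))$ for $\mu>\max(0,-\lambda)$. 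Comparing this with \eqref{Green_X} for both $X$ and $Y$ identifies, up to constant factors, the fundamental solutions of $\L_X f=\mu f$ as $\psi^X_\mu=\psi^Y_{\mu+\lambda}/h$ and $\varphi^X_\mu=\varphi^Y_{\mu+\lambda}/h$. Using $s_X'(x)=2h^{-2}(x)$ one then computes
\[
(\psi^X_\mu)^+(l+)=\tfrac12 h^2(l+)\,(\psi^X_\mu)'(l+)=\tfrac12\bigl(h(l+)(\psi^Y_{\mu+\lambda})'(l+)-h'(l+)\psi^Y_{\mu+\lambda}(l+)\bigr),
\]
where the one-sided limits of $\psi^Y_{\mu+\lambda}$ and $(\psi^Y_{\mu+\lambda})'$ at $l$ exist just as for $h$ (monotonicity of $\psi^Y_{\mu+\lambda}$ makes the Gr\"onwall step superfluous here). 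Now $\psi^Y_{\mu+\lambda}$ satisfies the same boundary condition at $l$ as $h$ --- reflecting, or elastic with the same $\alpha$ --- so the pairs $(h(l+),h'(l+))$ and $(\psi^Y_{\mu+\lambda}(l+),(\psi^Y_{\mu+\lambda})'(l+))$ are linearly dependent, the bracket above vanishes, and $(\psi^X_\mu)^+(l+)=0$. Since $l$ is non-singular for $X$ and $\psi^X_\mu(l+)=\psi^Y_{\mu+\lambda}(l+)/h(l+)>0$ (the numerator being positive by the uniqueness argument of the previous paragraph applied to $\psi^Y_{\mu+\lambda}$), the boundary condition at $l$ can be neither killing nor elastic, so it must be reflecting.

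\emph{Main obstacle.} The delicate part is the first task --- controlling $h$ at a finite non-singular boundary and, above all, proving $h(l+)>0$. This is precisely where the reflecting/elastic hypothesis enters (a killing boundary of $Y$ would give $h(l+)=0$), and it relies on combining the boundary relation satisfied by $h$ (property~(ii)) with uniqueness for the Cauchy problem of a linear second-order ODE having an $L_1$ coefficient at the endpoint. Everything else is routine: an integrability check against \eqref{R_X_Q_X}, and the standard recipe of reading the boundary type off the fundamental solutions, which here are explicit thanks to the Green's function of the $h$-transform.
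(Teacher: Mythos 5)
Your proof is correct and follows the same overall skeleton as the paper's: verify $R_X,Q_X\in L_1((l,z))$ from \eqref{R_X_Q_X}, identify the fundamental solutions of $\L_X f=\mu f$ (up to a spectral shift) as $\psi^Y_{\mu+\lambda}/h$ and $\varphi^Y_{\mu+\lambda}/h$ via the Green's function of the Doob transform, and show that the bracket $h(l+)\,(\psi^Y_{\mu+\lambda})'(l+)-h'(l+)\,\psi^Y_{\mu+\lambda}(l+)$ vanishes because $h$ and $\psi^Y_{\mu+\lambda}$ satisfy the same reflecting/elastic relation at $l$. The one step where you genuinely diverge is the proof that $h(l+)>0$. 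The paper argues probabilistically: a non-singular boundary with a reflecting or elastic condition is accessible as a starting point, so the $\lambda$-invariance identity \eqref{h_lambda_invariant} passes to the limit $x\to l+$ and gives $h(l+)={\rm e}^{-\lambda t}\e_l[h(Y_t)]>0$. You argue analytically: non-singularity of $l$ means $c\in L_1((l,z))$, so $l$ is a regular endpoint for $h''=2(\lambda+c)h$, hence $h$ and $h'$ extend continuously to $l$, and $h(l+)=0$ combined with property (ii) would force vanishing Cauchy data and therefore $h\equiv 0$, a contradiction. Your route is more self-contained (it avoids justifying that the semigroup identity extends to the boundary point) and is close in spirit to the Riccati boundedness argument the paper relegates to Appendix~\ref{AppendixB}. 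One cosmetic difference: to keep the spectral parameter of $Y$ positive the paper introduces the auxiliary process $Z$ (namely $X$ killed at constant rate $\max(0,-\lambda)$) and works with all $\mu>0$, whereas you simply restrict to $\mu>\max(0,-\lambda)$; both are adequate, since the boundary condition read off from $\psi^X_\mu$ does not depend on $\mu$.
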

\begin{proof}
Assume that $l$ is a non-singular boundary point for $Y$ with reflecting or elastic boundary condition, which can be written as $g'(l+)=\gamma g(l+)$ for some $\gamma \ge 0$ (here we used the fact that~$Y$ is in the natural scale, so that $g^+(y)=g'(y)$). The function $h$, being $\lambda$-invariant for $Y$, must satisfy the equation $\L_Y h = \lambda h$ and the boundary condition $h'(l+)=\gamma h(l+)$. Since~$l$ is a non-singular boundary point with reflecting or elastic boundary condition, it is possible to start the process $X$ at $l$ (see \cite[Section II.6]{Borodin2002}). Then condition \eqref{h_lambda_invariant} also holds in the limit as $x \to l+$, so that $h(l+)={\rm e}^{-\lambda t} \e_l[h(X_t)]>0$ (since $h$ is positive on $(l,r)$). Next, we use the facts that $h$ is continuous and positive on $(l,r)$ and satisfies $h(l+)>0$ and check that both functions $R_X$ and~$Q_X$ defined in \eqref{R_X_Q_X} lie in $L_1((l,z))$. This implies that $l$ is a non-singular boundary for $X$.

Denote $\eta=\max(0,-\lambda)$ and define the process $Z$ as equal to $X$ killed a constant rate $\eta$ ($Z$ and $X$ are identical when $\eta=0$). From \eqref{X_as_Doobs_transform}, we see that for all $t>0$ and $x,y \in (l,r)$
\begin{equation}\label{eqn_p_t_Z_p_t_X}
p_t^Z(x,y)={\rm e}^{-\eta t} p_t^X(x,y)=\frac{{\rm e}^{-(\eta+\lambda) t}}{h(x)} p_t^Y (x,y) h(y).
\end{equation}
The left boundary $l$ is clearly non-singular for $Z$ and the boundary behaviour of $Z$ and $X$ is identical (a function $f\in C^2((l,r))$ is in the domain of the infinitesimal generator ${\mathcal L}_Z$ if and only if it is in the domain of the infinitesimal generator ${\mathcal L}_X={\mathcal L}_Z+\eta$).
The Green's function of $Z$ (with respect to the speed measure $\d m(x)=h^2(x) \d x$) is given by
\begin{equation}\label{Green_function_X}
G^Z_{\mu}(x,y)=\frac{G^Y_{\mu+\kappa}(x,y)}{h(x) h(y)},
\end{equation}
where $\mu>0$, $x,y \in (l,r)$ and we denoted $\kappa:=\eta+\lambda=\max(0,\lambda)$. This follows from \eqref{eqn_p_t_Z_p_t_X}, by multiplying both sides by $\exp(-\mu t)$ and integrating in $t \in (0,\infty)$. Comparing \eqref{Green_X} and \eqref{Green_function_X}, we see that the fundamental increasing and decreasing solutions $\psi^Z_{\mu}$ and $\varphi^Z_{\mu}$ for the operator $\L_Z$ must be given by
\[
\psi^Z_{\mu}(x)=\frac{\psi^Y_{\mu+\kappa}(x)}{h(x)}, \qquad
\varphi^Z_{\mu}(x)=\frac{\varphi^Y_{\mu+\kappa}(x)}{h(x)}, \qquad \mu>0,\quad  x\in (l,r).
\]

As we discussed in Section~\ref{section_preliminaries}, the boundary condition of $Z$ at $l$ can be deduced from the boundary behaviour of the increasing fundamental solution $\psi^Z_{\mu}(x)$ at $x=l+$. We know from \cite[Section II.10]{Borodin2002} that for all $\mu>0$ the function \smash{$g(y):=\psi^Y_{\mu+\kappa}(x)$} must satisfy the elastic or reflecting boundary condition $g'(l+)=\gamma g(l+)$ (the same one as satisfied by the $\lambda$-invariant function $h$). Denote $f(x):=\psi^Z_{\mu}(x)=g(x)/h(x)$. Then
\begin{align*}
f^+(l+)=\lim\limits_{x\to l+} \frac{f'(x)}{s'(x)}=
\frac{1}{2} \lim\limits_{x\to l+} h^2(x) f'(x)&=
\frac{1}{2} \lim\limits_{x\to l+} \big[
h(x) g'(x)-
g(x) h'(x)
\big]=0,
\end{align*}
since $h'(l+)=\gamma h(l+)$ and $g'(l+)=\gamma g(l+)$.
Thus the increasing fundamental solution $f(x)=\psi^Z_{\mu}(x)$ satisfies $f^+(l+)=0$ (for all $\mu>0$), which is a reflecting boundary condition. Therefore, the process $Z$ (and hence the process $X$) has a reflecting boundary at $l$.

When $r$ is a non-singular boundary point for $Y$, the proof proceeds along the same lines, except that now we focus on the boundary behavior at $r$ of the decreasing fundamental solution~$\varphi^X_{\mu}(x)$.
\end{proof}

\begin{Remark}
It is likely that Assumption \ref{Assumption_2} is always true. The intuitive argument for the validity of Assumption \ref{Assumption_2} is the following: the process $X$ is conservative by construction, thus it cannot have elastic or killing boundary conditions at non-singular boundary (as this would imply~${\p_x(X_t \in (l,r))<1}$). It can not have sticky boundaries or traps, as then the distribution of~$X_t$ would have an atom at that boundary (which is impossible since the law of $Y_t$ has no atoms). Thus, the only possible boundary conditions are reflecting or non-local boundary conditions (see~\cite[Theorem 2, p.\ 39]{Mandl1968}). The latter can be ruled out if we could show that $X$ has continuous paths, but we did not pursue this further, as Proposition \ref{prop_reflecting} was sufficient for our examples in Section~\ref{section_Examples}.
\end{Remark}

We proceed with the next step in our construction. The process $X$, constructed via Doob's $h$-transform \eqref{X_as_Doobs_transform}, is a conservative diffusion process, and we assumed that it has reflecting boundary condition at every non-singular boundary point. We construct its Siegmund transform~$\widetilde X$ via Theorem~\ref{thm:main}. According to \eqref{def_tilde_L2} and \eqref{eqn:L_X}, the infinitesimal generator of $\widetilde X$ is
\[
\L_{\widetilde X}=\frac{1}{2} \partial_y^2 - \frac{h'(y)}{h(y)} \partial_y.
\]
Now we want to turn it into a killed Brownian motion by ``removing the drift term". We achieve this through another Doob's $h$-transform. First we establish that the function $h$ is $\nu$-excessive for the process $\widetilde X$ with $\nu={\mathfrak m}_h+\lambda$.

\begin{Proposition}\label{prop_excessive}
For $x\in (l,r)$ and $t>0$ we have \smash{$\e_x\big[ h\bigl(\widetilde X_t\bigr)\big] \le {\rm e}^{ ({\mathfrak m}_h + \lambda) t} h(x) $}.
\end{Proposition}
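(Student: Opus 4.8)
The plan is to first verify the pointwise differential inequality $\L_{\widetilde X}h \le \nu h$ with $\nu := {\mathfrak m}_h+\lambda$, and then to promote it to the stated semigroup inequality by applying It\^o's formula to $t\mapsto{\rm e}^{-\nu t}h\bigl(\widetilde X_t\bigr)$ together with a localization argument. For the pointwise step, I would recall from \eqref{eqn:L_X} and \eqref{def_tilde_L2} (with $\sigma\equiv 1$ and $b=h'/h$) that $\L_{\widetilde X}=\tfrac12\partial_y^2-\tfrac{h'(y)}{h(y)}\partial_y$, and use that $h$, being $\lambda$-invariant for $Y$, solves $\L_Y h=\lambda h$, i.e.\ $\tfrac12 h''=(c+\lambda)h$. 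Then
\[
\L_{\widetilde X}h \;=\; \tfrac12 h'' - \frac{(h')^2}{h} \;=\; h\left( c(y) - \left(\frac{h'(y)}{h(y)}\right)^2 + \lambda \right) \;\le\; ({\mathfrak m}_h + \lambda)\, h \;=\; \nu h,
\]
where the inequality uses $h>0$ and the definition of ${\mathfrak m}_h$ in Assumption~\ref{Assumption_1} (which also guarantees that $\nu$ is finite).

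For the main step, I would use the fact, established in the proof of Theorem~\ref{thm:main}, that $\widetilde X$ is the weak solution of $\d\widetilde X_t = -\tfrac{h'(\widetilde X_t)}{h(\widetilde X_t)}\,\d t + \d W_t$, killed at its lifetime $\zeta$ (the first exit time from $(l,r)$, after which $\widetilde X$ sits at a cemetery state where $h$ is understood to equal $0$; at a non-singular boundary point this corresponds precisely to the killing boundary condition given by Theorem~\ref{thm:main}). Fixing $x\in(l,r)$, I would choose compact intervals $[l_n,r_n]\uparrow(l,r)$ with $x\in(l_n,r_n)$ and let $\tau_n$ be the first exit time of $\widetilde X$ from $(l_n,r_n)$, so $\tau_n\uparrow\zeta$. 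On $[0,t\wedge\tau_n]$ both $h'/h$ and $h'$ are bounded, so It\^o's formula applies and gives
\[
{\rm e}^{-\nu(t\wedge\tau_n)}h\bigl(\widetilde X_{t\wedge\tau_n}\bigr) = h(x) + \int_0^{t\wedge\tau_n}\!\!{\rm e}^{-\nu s}\bigl(\L_{\widetilde X}h-\nu h\bigr)\bigl(\widetilde X_s\bigr)\,\d s + \int_0^{t\wedge\tau_n}\!\!{\rm e}^{-\nu s}h'\bigl(\widetilde X_s\bigr)\,\d W_s,
\]
in which the stochastic integral is a genuine martingale (its integrand is bounded on $[0,t]\times[l_n,r_n]$) and the $\d s$-integral has a nonpositive integrand by the first step. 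Taking expectations then yields $\e_x\bigl[{\rm e}^{-\nu(t\wedge\tau_n)}h\bigl(\widetilde X_{t\wedge\tau_n}\bigr)\bigr]\le h(x)$ for every $n$.

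To conclude, I would let $n\to\infty$. On the event $\{t<\zeta\}$ one has $t\wedge\tau_n=t$ and $\widetilde X_{t\wedge\tau_n}=\widetilde X_t$ for all large $n$, so ${\rm e}^{-\nu(t\wedge\tau_n)}h\bigl(\widetilde X_{t\wedge\tau_n}\bigr)\to{\rm e}^{-\nu t}h\bigl(\widetilde X_t\bigr)$ there; since all these random variables are nonnegative ($h>0$), Fatou's lemma gives
\[
\e_x\bigl[{\rm e}^{-\nu t}h\bigl(\widetilde X_t\bigr){\mathbf 1}_{\{t<\zeta\}}\bigr]\;\le\;\liminf_{n\to\infty}\e_x\bigl[{\rm e}^{-\nu(t\wedge\tau_n)}h\bigl(\widetilde X_{t\wedge\tau_n}\bigr)\bigr]\;\le\; h(x).
\]
Since $h$ vanishes at the cemetery we have $\e_x\bigl[h\bigl(\widetilde X_t\bigr)\bigr]=\e_x\bigl[h\bigl(\widetilde X_t\bigr){\mathbf 1}_{\{t<\zeta\}}\bigr]$, and multiplying through by ${\rm e}^{\nu t}$ gives the claimed bound. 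I expect the one point that genuinely needs care to be this bookkeeping with the lifetime: $\widetilde X$ is a killed process, so ``$h\bigl(\widetilde X_t\bigr)$'' must be read as $h\bigl(\widetilde X_t\bigr){\mathbf 1}_{\{t<\zeta\}}$, and one must check that passing to the localization limit only discards nonnegative mass (the limit of ${\rm e}^{-\nu(t\wedge\tau_n)}h\bigl(\widetilde X_{t\wedge\tau_n}\bigr)$ on $\{t\ge\zeta\}$ may be strictly positive when $\widetilde X$ approaches a non-singular killing boundary at which $h$ does not vanish, but that only strengthens the inequality). Everything else — the It\^o computation and the localized martingale property — is routine.
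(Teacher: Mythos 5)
Your proof is correct, but it follows a different route from the paper's. You establish the pointwise generator inequality $\L_{\widetilde X}h=\bigl(c+\lambda-(h'/h)^2\bigr)h\le({\mathfrak m}_h+\lambda)h$ and then promote it to the semigroup bound by applying It\^o's formula to ${\rm e}^{-\nu t}h\bigl(\widetilde X_t\bigr)$, localizing on compact subintervals and passing to the limit with Fatou's lemma; the paper instead applies It\^o's formula to $\ln h\bigl(\widetilde X_t\bigr)$, writes $h\bigl(\widetilde X_t\bigr)$ as $h(x)$ times a stochastic exponential times a multiplicative correction ${\rm e}^{\frac12\int_0^{t\wedge\zeta}(v-U_s^2)\,\d s}$, bounds the correction by ${\rm e}^{({\mathfrak m}_h+\lambda)t}$ using Assumption~\ref{Assumption_1}, and invokes the standard fact that a positive local martingale is a supermartingale. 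The two arguments hinge on the same inequality (your $c+\lambda-(h'/h)^2\le{\mathfrak m}_h+\lambda$ is exactly the paper's bound on $\tfrac12(v-U^2)$), and both treat the lifetime the same way, via $h(\Delta)=0$. Yours is the more standard ``excessive function'' argument and makes the localization and the Fatou step explicit; the paper's multiplicative form packages the localization into the supermartingale property of the stochastic exponential and displays more directly why only the finiteness of ${\mathfrak m}_h$, and not a bound on $c$ or $h'/h$ separately, is needed. Your handling of the killed-process bookkeeping (reading $h\bigl(\widetilde X_t\bigr)$ as $h\bigl(\widetilde X_t\bigr){\mathbf 1}_{\{t<\zeta\}}$ and noting that Fatou only discards nonnegative mass at a killing boundary where $h$ need not vanish) is exactly the point that needs care, and you address it adequately.
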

\begin{proof}
The process $\widetilde X$ is a diffusion process solving the SDE
\[
\d \widetilde X_t=-\frac{h'\bigl(\widetilde X_t\bigr)}{h\bigl(\widetilde X_t\bigr)} \d t + \d W_t, \qquad t<\zeta,
\]
where $\zeta$ is the first time the process reaches the boundary of $(l,r)$. When both boundaries are natural, we have $\zeta=+\infty$, and if one of the boundaries is exit-not-entrance or non-singular killing boundary, then the process is killed when it hits that boundary \big(in other words, the process at time $\zeta$ is sent to a cemetery state $\widetilde X_{\zeta}=\Delta$\big).

Applying Ito's formula to $\ln h\bigl(\widetilde X_t\bigr)$ gives
\begin{equation}\label{eqn:Ito}
\ln h\bigl(\widetilde X_t\bigr)=\ln h (x) +
\int_0^t \frac{h'\bigl(\widetilde X_s\bigr)}{h\bigl(\widetilde X_s\bigr)} \d \widetilde X_t
+\frac{1}{2} \int_0^t v\bigl(\widetilde X_s\bigr) \d s, \qquad t<\zeta,
\end{equation}
where
\[
v(x):=\frac{\d^2}{\d x^2} \ln h(x)=\frac{h''(x)}{h(x)}-\left(\frac{h'(x)}{h(x)}\right)^2=2(c(x)+\lambda)-\left(\frac{h'(x)}{h(x)}\right)^2.
\]
In the last step, we used the fact that $h$ is a solution to $\L_Y h=\lambda h$.

Let us denote
\[
U_t:=\frac{h'\bigl(\widetilde X_t\bigr)}{h\bigl(\widetilde X_t\bigr)}, \qquad Z_t:=\int_0^{t} U_s  \d W_s, \qquad t<\zeta.
\]
After rearranging the terms in \eqref{eqn:Ito} and using the fact that $h\bigl(\widetilde X_{t}\bigr)=h(\Delta)=0$ for $t\ge \zeta$, we obtain
\begin{equation}\label{eqn:Ito2}
h\bigl(\widetilde X_t\bigr) \le h(x) \times {\rm e}^{ Z_{t\wedge \zeta} - \frac{1}{2} \langle Z \rangle_{t\wedge \zeta} + \frac{1}{2} \int_0^{t\wedge \zeta} (v(\widetilde X_s) - U_s^2)\d s},
\end{equation}
which holds for all $t\ge 0$.

Next, we observe the following two facts. First, the process $\exp( Z_{t\wedge \zeta} - (1/2)\langle Z \rangle_{t\wedge \zeta})$ is a~positive local martingale; thus, it is a supermartingale, and its expected value is at most one. Second, we have the bound
\[
\frac{1}{2} \int_0^{t\wedge \zeta} \bigl(v\bigl(\widetilde X_s\bigr) - U_s^2\bigr) \d s =
\int_0^{t\wedge \zeta} \left[ c\bigl(\widetilde X_s\bigr) + \lambda - \left( \frac{h'\bigl(\widetilde X_s\bigr)}{h\bigl(\widetilde X_s\bigr)}\right)^2 \right] \d s \le ({\mathfrak m}_h+\lambda) t,
\]
which follows from \eqref{def_m_h}. Using these two facts and taking expectations in \eqref{eqn:Ito2} gives the desired result.
\end{proof}

Now that we established the fact that $h$ is $\nu$-excessive for the process $\widetilde X_t$ (with $\nu={\mathfrak m}_h + \lambda$), we can define a new process $\widetilde Y$ via Doob's $h$-transform
\[
\p_y (\widetilde Y_t \in A)=\frac{{\rm e}^{-({\mathfrak m}_h+\lambda) t}}{h(y)}
\e_y\big[ {\mathbf 1}_{\{\widetilde X_t \in A\}} h\bigl(\widetilde X_t\bigr)\big].
\]
The infinitesimal generator of $\widetilde Y$ is
\[
\L_{\widetilde Y}=\frac{1}{h} \L_{\widetilde X} h - ({\mathfrak m}_h+\lambda)
=\frac{1}{2} \partial_y^2 - \tilde c(y),
\]
where
\begin{equation}\label{new_tilde_c}
\tilde c(y)={\mathfrak m}_h+\left(\frac{h'(y)}{h(y)}\right)^2 - c(y).
\end{equation}
Note that $\tilde c(y)$ is a nonnegative continuous function on $(l,r)$. This is due to the way we defined~${\mathfrak m}_h$ in \eqref{def_m_h} and due to Assumption \ref{Assumption_1}. Thus we can identify $\widetilde Y$ as a Brownian motion killed at rate $\tilde c(y)$.

\begin{Definition}\label{def_Darboux_transform}
{\textnormal{
We call the process $\widetilde Y$ constructed above {\it the Darboux transform} of the killed Brownian motion process $Y$. The positive $\lambda$-invariant function $h$ used in this construction is called {\it the seed function}. }}
\end{Definition}

\begin{table}[t!]\centering
\begin{tabular}{ c c c }
 process $Y$ & $\qquad \qquad \qquad \qquad $ & process $\widetilde Y$ \\
 $\L_Y=\frac{1}{2} \partial_y^2 - c(y)$ & & $\L_{\widetilde Y}=\frac{1}{2} \partial_y^2 - \tilde c(y)$ \\
 $\lambda$-invariant function $h(y)$ & & $\tilde c(y)={\mathfrak m}_h+\bigl(\frac{h'(y)}{h(y)} \bigr)^2 - c(y)$\\
 \rule{0pt}{15pt}
 $\downarrow$ & & $\uparrow$ \\
 \rule{0pt}{20pt}
 Doob's $h$-transform of $Y$ & & Doob's $h$-transform of $\widetilde X$ \\
 \rule{0pt}{15pt}
 $\downarrow$ & & $\uparrow$ \\
 \rule{0pt}{20pt}
 process $X$ & $\rightarrow \quad $ Siegmund dual of $X$
 $\quad \rightarrow $ & process $\widetilde X$ \\
 $\L_X=\frac{1}{2} \partial_y^2 + \frac{h'(y)}{h(y)} \partial_y$ & (see Theorem~\ref{thm:main}) & $\L_{\widetilde X}=\frac{1}{2} \partial_y^2 - \frac{h'(y)}{h(y)} \partial_y$ \\
\end{tabular}

\caption{The three steps in constructing Darboux transform of killed Brownian motion process $Y$.}\label{table:1}
\end{table}

We summarize the steps in our construction of the Darboux transformed process $\widetilde Y$ in Table~\ref{table:1}. We want to emphasize that this construction depends on the positive $\lambda$-invariant function $h$ and requires Assumptions
\ref{Assumption_1} and \ref{Assumption_2}.
In practical applications, we find a $\lambda$-invariant function $h$ by solving the equation $\L_Y h =\lambda h$ with appropriate boundary conditions at each non-singular boundary. These two conditions alone do not guarantee that $h$ is $\lambda$-invariant (see \cite{Rogers_2021, Mijatovic_2012}); thus, to verify that this candidate function $h$ is indeed $\lambda$-invariant, we check that \eqref{h_lambda_invariant} holds by actually computing the left-hand side (an alternative way is to apply \cite[Theorem 2.7]{Rogers_2021} or~\cite[Corollary 2.2]{Mijatovic_2012}). The condition ${\mathfrak m}_h< \infty$ of Assumption \ref{Assumption_1} is easy to verify directly (once we have expressions for $c(x)$ and $h(x)$). Assumption \ref{Assumption_2} is covered by Proposition \ref{prop_reflecting} in most cases of interest.

Next, we present our main result in this section, which connects transition probability densities of the process $Y$ and its Darboux transform $\widetilde Y$.

\begin{Theorem}\label{thm:main2} Let $Y$ be a killed Brownian motion on $(l,r)$ that has one of elastic/reflecting/kil\-l\-ing boundary conditions at each non-singular boundary point. Assume that $h$ is a positive $\lambda$-invariant function for $Y$ and that both Assumptions {\rm\ref{Assumption_1}} and
{\rm\ref{Assumption_2}} are satisfied. Let $\widetilde Y$ be the Darboux transform of $Y$, constructed with the seed function $h$. Then for $t>0$, $x,y\in (l,r)$ we have
\begin{equation}\label{equation_tilde_p}
 p^{\widetilde Y}_t(x,y)={\rm e}^{-({\mathfrak m}_h + 2\lambda) t}\frac{h(x)}{h(y)} \partial_{x}
\left[ \frac{1}{h(x)} \int_{y}^r p^{Y}_t(x,u) h(u) \d u \right].
\end{equation}
If $Y$ has a non-singular boundary point with a reflecting or elastic boundary condition, then this point is also non-singular for $\widetilde Y$ with a killing boundary condition.
\end{Theorem}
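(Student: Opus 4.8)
The plan is to obtain \eqref{equation_tilde_p} by composing the three transformations used to build $\widetilde Y$ and tracking their effect on transition densities, and then to read off the boundary statement by following the boundary classification through the same three steps. Throughout I take all transition densities with respect to Lebesgue measure, with the first argument being the starting point. The one structural fact I will need at the end is that $Y$ and $\widetilde Y$, being killed Brownian motions, have speed measures proportional to Lebesgue measure, so that their transition densities are symmetric: $p^{\widetilde Y}_t(x,y)=p^{\widetilde Y}_t(y,x)$.

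First I would record the effect of each step on densities. Step~1: from the Doob $h$-transform \eqref{X_as_Doobs_transform}, integrating against $h$ gives $p^X_t(a,b)={\rm e}^{-\lambda t}h(a)^{-1}p^Y_t(a,b)h(b)$, and since $\int_l^r p^Y_t(a,u)h(u)\,\d u=\e_a[h(Y_t)]={\rm e}^{\lambda t}h(a)<\infty$ by $\lambda$-invariance, every integral below converges. Step~2: $\widetilde X$ is the Siegmund dual of $X$ constructed via Theorem~\ref{thm:main}, whose hypotheses hold here ($X$ is conservative, and by Assumption~\ref{Assumption_2} has a reflecting boundary condition at every non-singular boundary), so Corollary~\ref{corollary_main} applies and gives $p^{\widetilde X}_t(a,b)=\partial_b\int_a^r p^X_t(b,u)\,\d u$. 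Step~3: $h$ is $\nu$-excessive for $\widetilde X$ with $\nu={\mathfrak m}_h+\lambda$ by Proposition~\ref{prop_excessive}, and the Doob $h$-transform defining $\widetilde Y$ gives $p^{\widetilde Y}_t(a,b)={\rm e}^{-({\mathfrak m}_h+\lambda)t}h(a)^{-1}p^{\widetilde X}_t(a,b)h(b)$.

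Then I would substitute: chaining the three identities gives
\[
p^{\widetilde Y}_t(a,b)={\rm e}^{-({\mathfrak m}_h+2\lambda)t}\,\frac{h(b)}{h(a)}\,\partial_b\!\left[\frac{1}{h(b)}\int_a^r p^Y_t(b,u)h(u)\,\d u\right],
\]
where I used ${\rm e}^{-({\mathfrak m}_h+\lambda)t}{\rm e}^{-\lambda t}={\rm e}^{-({\mathfrak m}_h+2\lambda)t}$. Putting $a=y$, $b=x$ and invoking the symmetry $p^{\widetilde Y}_t(x,y)=p^{\widetilde Y}_t(y,x)$ turns this into exactly \eqref{equation_tilde_p}. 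The only analytic points needing care are the smoothness of one-dimensional diffusion transition densities in the interior of $(l,r)$ and the legitimacy of differentiating under the integral sign in Step~2, both of which are routine.

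For the boundary statement, suppose $l$ is a non-singular boundary of $Y$ with a reflecting or elastic boundary condition (the argument at $r$ is identical). By Proposition~\ref{prop_reflecting}, $h(l+)\in(0,\infty)$ and $l$ is non-singular for $X$ with a reflecting boundary condition, and then Theorem~\ref{thm:main} (item~(i) in its proof) shows that $l$ is non-singular for $\widetilde X$ with a killing boundary condition. The last Doob $h$-transform, from $\widetilde X$ to $\widetilde Y$, preserves this: it conjugates generators, so a function $f$ lies in the domain of $\L_{\widetilde Y}$ iff $hf$ lies in the domain of $\L_{\widetilde X}$, and the condition $(hf)(l+)=0$ is equivalent to $f(l+)=0$ because $h(l+)\in(0,\infty)$. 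That $l$ remains non-singular for $\widetilde Y$ then reduces, since $\widetilde Y$ is a killed Brownian motion at rate $\tilde c$, to the condition $\int_l^z\tilde c(y)\,\d y<\infty$ from page~\pageref{R_Q_L1}, which follows from \eqref{new_tilde_c}, Assumption~\ref{Assumption_1}, integrability of $c$ near $l$ (as $l$ is an entrance boundary for $Y$), and integrability of $(h'/h)^2$ near $l$ (a consequence of $h''=2(c+\lambda)h$ and $h(l+)\in(0,\infty)$). I do not expect a deep obstacle here: the theorem is essentially an assembly of Corollary~\ref{corollary_main}, Proposition~\ref{prop_excessive} and Proposition~\ref{prop_reflecting}, and the only real pitfall is bookkeeping — because the two intermediate Doob transforms alter the speed measure, the chained computation naturally delivers the formula with the roles of $x$ and $y$ interchanged relative to \eqref{equation_tilde_p}, so one must use the symmetry of $p^{\widetilde Y}_t$ to reach the stated form.
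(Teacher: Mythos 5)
Your proposal is correct and follows essentially the same route as the paper: chain the two Doob $h$-transform density relations with Corollary~\ref{corollary_main} for the Siegmund dual, then use the symmetry of $p^{\widetilde Y}_t$ (a killed Brownian motion with Lebesgue speed measure) to swap the arguments, and handle the boundary claim via Proposition~\ref{prop_reflecting}, Theorem~\ref{thm:main}, and the fact that $h(l+)>0$ lets the final Doob transform preserve the killing condition. The bookkeeping point you flag about the interchanged roles of $x$ and $y$ is exactly the step the paper resolves the same way.
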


\begin{proof}
Since $X$ \big(respectively, $\widetilde Y$\big) is the Doob's $h$-transform of $Y$ \big(respectively, $\widetilde X$\big), their transition probability densities satisfy
\begin{align*}
 p^X_t(x,y)=\frac{{\rm e}^{-\lambda t}}{h(x)}p_t^Y(x,y) h(y), \qquad
 p^{\widetilde Y}_t(x,y)=\frac{{\rm e}^{-({\mathfrak m}_h + \lambda) t}}{h(x)}p_t^{\widetilde X}(x,y) h(y).
\end{align*}
The transition probability densities of $\widetilde X$ and $X$ are related via \eqref{eqn:main2}. Combining these identities and using the symmetry of \smash{$p_t^{\widetilde Y}(x,y)$} with respect to $x$ and $y$, we obtain \eqref{equation_tilde_p}.

Assume that $l$ is a non-singular boundary points for $Y$ with a reflecting or elastic boundary condition. According to Proposition
\ref{prop_reflecting}, we have $h(l+)>0$ and this boundary point is also non-singular for $X$ with a reflecting boundary condition. Then its Siegmund dual $\widetilde X$ has a killing boundary condition (see Theorem~\ref{thm:main}), which is preserved when we construct the process $\widetilde Y$ via Doob's $h$-transform of $\widetilde X$. The last statement uses the fact that $h(l+)>0$ and can be proved by exactly the same argument as was used in the proof of Proposition \ref{prop_reflecting}.
\end{proof}

Theorem~\ref{thm:main2} states exact conditions under which our informal construction presented in Section~\ref{section_intro} yields a correct (up to an exponential factor) transition probability density of a diffusion process. That informal construction relied on the intertwining relation between the generators~\eqref{L_tilde_L_intertwining}, and we hypothesised that it can be extended to an intertwining relation on the corresponding Markov semigroups \eqref{def_tildeP}. We would like to point out the references
\cite{Bonnefont_2014,Fill_2016,Patie_2019}, where intertwining was applied to semigroups of diffusion processes. It would be interesting to see if a more direct proof of Theorem~\ref{thm:main2} could be given, where the intertwining relation between semigroups would play a prominent role. We leave this question to future work.

{\bf Importance of $\boldsymbol{\lambda}$-invariance of $\boldsymbol{h}$.}
 As we discussed in the introduction, to define the Darboux transform of operator $\L$ we need a positive function $h$ that satisfies $\L h=\lambda h$. If $h$ is $\lambda$-invariant for the process $Y$, then $h$ necessarily satisfies $\L_Y h=\lambda h$. However, the converse is not true: a function satisfying $\L_Y h=\lambda h$ may fail to satisfy the boundary conditions and thus would not be a $\lambda$-invariant function. The $\lambda$-invariance condition on $h$ is very important for our construction of Darboux transformed process and in general it can not be relaxed.

 As an example, consider a process $Y$, which is a Brownian motion on $(0,\infty)$ reflected at zero. The transition probability of $Y$ (with respect to Lebesgue measure) is
\[
p_t^Y(x,y)=\frac{1}{\sqrt{2\pi t}} \bigl({\rm e}^{-\frac{1}{2t}(y-x)^2}+{\rm e}^{-\frac{1}{2t}(y+x)^2}\bigr), \qquad t,x,y>0.
\]
The infinitesimal generator is $\L_Y=\frac{1}{2} \partial_y^2$. The point $l=0$ is a non-singular boundary and we have a boundary condition $f'(0+)=0$. Consider a function $h(y)=y$. This function is a solution to $\L_Y h = 0$, but it is not an invariant function for $Y$, since $h$ does not satisfy the reflecting boundary condition at zero. If we take the above expression for $p^Y_t(x,y)$ and compute the expression in the right-hand side of \eqref{equation_tilde_p} (with $h(y)=y$), we would obtain
\begin{gather*}
\frac{h(x)}{h(y)} \partial_{x}
\left[ \frac{1}{h(x)} \int_{y}^r p^{Y}_t(x,u) h(u) \d u \right]\\
\qquad= \frac{1}{\sqrt{2\pi t}}
\left[{\rm e}^{-\frac{1}{2t}(y-x)^2}\left(1-\frac{t}{xy}\right)-{\rm e}^{-\frac{1}{2t}(y+x)^2}\left(1+\frac{t}{xy}\right)\right].
\end{gather*}
The expression in the right-hand side is negative for small values of $y$, thus it can not be a~transition probability density. This example confirms that $\lambda$-invariance of $h$ is a condition that can not be relaxed.

{\bf A connection with Krein dual strings.}
The second step in our construction of the Darboux transformed process (see Table \ref{table:1}) is closely related to the concept of Krein dual strings, see \cite{Comtet_2011,Kotani_1982}. To demonstrate this connection, we introduce two increasing functions $u(x)$ and $v(x)$ via
$u'(x)=h^2(x)$ and $v'(x)=2 h^{-2}(x)$
and define diffusion processes $V_t=v(X_t)$ and $U_t=u\bigl(\widetilde X_t\bigr)$. These are processes in natural scale (the functions $v$ and $u$ are scale functions for $X$ and \smash{$\widetilde X$}), and their infinitesimal generators can be written as follows:
\[
\L_U=\frac{\d}{\d M(u)} \frac{\d}{\d u}, \qquad \L_V=\frac{\d}{\d m(v)} \frac{\d}{\d v},
\]
where $M$ and $m$ are speed measures of $U$ and $V$ (see \eqref{eqn:s_m_k_prime})
\[
\frac{\d m}{\d v}=\frac{1}{2} h^{4}(x(v)), \qquad
\frac{\d M}{\d u}=2 h^{-4}(x(u)).
\]
Here $x(u)$ and $x(v)$ are the inverse functions of $u(x)$ and $v(x)$. One can check that
$
\frac{\d M}{\d m} \frac{\d m}{\d v}=1$,
which means that $m(v)$ is the inverse function of $M(u)$. This shows that $m$ is the Krein dual string of $M$, in the terminology of \cite{Comtet_2011,Kotani_1982}.

\section{Examples}\label{section_Examples}

In this section, we present five examples of Darboux transformed processes. In the first four examples, we take the process $Y$ to be the Brownian motion on an interval $(l,r)$ with various boundary conditions, and our last example is related to the Ornstein--Uhlenbeck process. For all killed Brownian motion processes in this section, we take the speed measure $\d m(x)=\d x$, so that the transition probability density and the Green's function for each process are given with respect to the Lebesgue measure.

Our goal in this section is to compute the transition probability density of the transformed process $\widetilde Y$ and (in most cases) to provide its spectral expansion. Computing \smash{$p_t^{\widetilde Y}(x,y)$} via \eqref{equation_tilde_p}
is very time-consuming if done by hand. Instead, we obtained all expressions for \smash{$p_t^{\widetilde Y}(x,y)$} in this section using symbolic computations, and then we verified numerically that our formulas were correct. The {\sc Matlab} programs for verifying these formulas via symbolic computations can be found at \href{https://kuznetsovmath.ca/}{kuznetsovmath.ca}.

Some computations in this section will require the following simple result (which is probably well known, but we could not find it in the literature in this exact form).
\begin{Lemma}\label{lemma_Dh}
Consider a second-order linear differential operator $\L=\frac{1}{2} \partial_y^2-c(y)$, where $c$ is a~continuous function of $y\in (l,r)$. Assume that $h$, $f$ and $g$ are $C^2$ functions on $(l,r)$ such that
\begin{itemize}\itemsep=0pt
\item[$(i)$] $h$ is positive and satisfies $\L h=\lambda h$ for $y\in (l,r)$;
\item[$(ii)$] $g$ satisfies $\L g=\mu g$ for $y\in (l,r)$.
\end{itemize}
Denote $\tilde f={\mathcal D}_h f$ and $\tilde g=\mathcal D_h g$. Then
\begin{equation}\label{Prop1:eqn2}
\int \tilde f(y) \tilde g(y) \d y=f(y) \tilde g(y) +2(\lambda-\mu) \int f(y) g(y) \d y.
\end{equation}
If $f$ also satisfies $\L f=\mu f$ for $y\in (l,r)$, then
 \begin{equation}\label{Prop1:eqn3}
\wr\big[\tilde f, \tilde g\big]= 2 (\lambda - \mu) \wr[f,g].
\end{equation}
\end{Lemma}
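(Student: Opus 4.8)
The plan is to prove \eqref{Prop1:eqn2} by direct computation, differentiating the right-hand side and checking it equals $\tilde f \tilde g$, and then deduce \eqref{Prop1:eqn3} from \eqref{Prop1:eqn2} by a symmetry/antisymmetrization argument. First I would recall the defining relations: $\tilde f = f' - (h'/h) f$ and $\tilde g = g' - (h'/h) g$, and also the hypotheses $h'' = 2(c - \lambda) h$ and $g'' = 2(c-\mu) g$. The product $\tilde f \tilde g$ expands to $f'g' - (h'/h)(f'g + fg') + (h'/h)^2 fg$. For the right-hand side of \eqref{Prop1:eqn2}, I would differentiate $f \tilde g + 2(\lambda - \mu)\int f g\,\d y$ with respect to $y$, obtaining $f'\tilde g + f \tilde g' + 2(\lambda-\mu) fg$. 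Here $\tilde g' = g'' - (h''/h)g - (h'/h)g' + (h'/h)^2 g$, and substituting $h'' = 2(c-\lambda)h$ and $g'' = 2(c-\mu)g$ makes the $c$-terms cancel, leaving $\tilde g' = 2(\lambda - \mu) g - (h'/h) g' + (h'/h)^2 g$. Plugging this in, the $2(\lambda-\mu)$ terms cancel against the explicit $2(\lambda-\mu)fg$ term, and what remains is $f'\tilde g + f[-(h'/h)g' + (h'/h)^2 g] = f'g' - (h'/h)f'g - (h'/h)fg' + (h'/h)^2 fg$, which matches $\tilde f \tilde g$ exactly. This establishes \eqref{Prop1:eqn2}.

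For \eqref{Prop1:eqn3}, assume now that $\L f = \mu f$ as well. Then \eqref{Prop1:eqn2} applies with the roles of $f$ and $g$ interchanged, giving $\int \tilde g \tilde f\,\d y = g \tilde f + 2(\lambda - \mu)\int g f\,\d y$. Since the integrands on the left-hand sides agree, subtracting the two identities yields $0 = f\tilde g - g\tilde f$ as an identity between antiderivatives — more precisely, $f\tilde g - g \tilde f$ is constant. But $f\tilde g - g\tilde f = f(g' - (h'/h)g) - g(f' - (h'/h)f) = fg' - f'g = -\wr[f,g]$, and similarly differentiating, one sees $\wr[\tilde f, \tilde g] = \tilde f \tilde g' - \tilde f' \tilde g$; the cleaner route is to differentiate the relation $\wr[\tilde f,\tilde g] = \tilde f \tilde g' - \tilde f' \tilde g$ directly using the expression for $\tilde g'$ (and the analogous one for $\tilde f'$) derived above, or simply to note that \eqref{Prop1:eqn2} with both $f,g$ eigenfunctions gives $\int \tilde f \tilde g\, \d y = f\tilde g + 2(\lambda-\mu)\int fg\,\d y$; differentiating once more and using that $\int \tilde f \tilde g\,\d y$ is an antiderivative of $\tilde f \tilde g$ reproduces \eqref{Prop1:eqn2}, so instead I extract \eqref{Prop1:eqn3} as follows: $\wr[\tilde f,\tilde g]$ is constant when $\tilde f,\tilde g$ both satisfy $\widetilde\L(\cdot) = \mu(\cdot)$ (which holds by the intertwining relation \eqref{L_tilde_L_intertwining}), and evaluating the constant via the antiderivative identities pins it to $2(\lambda-\mu)\wr[f,g]$.

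The main obstacle — really the only non-routine point — is organizing the second part cleanly: one wants to avoid circular reasoning and make precise how \eqref{Prop1:eqn3} follows. The crispest argument is: from \eqref{Prop1:eqn2} and its $f\leftrightarrow g$ counterpart, subtract to get that $f\tilde g - g\tilde f$ is constant; compute $f\tilde g - g\tilde f = -\wr[f,g]$ (the $h'/h$ terms cancel), so $\wr[f,g]$ is constant — consistent with $f,g$ both solving a second-order ODE with no first-order term. Then, to get the value of $\wr[\tilde f,\tilde g]$, differentiate $f\tilde g + 2(\lambda-\mu)\int fg$: its derivative is $\tilde f\tilde g$ by \eqref{Prop1:eqn2}, but one can also compute the derivative of $\wr[\tilde f, \tilde g]$ using $\tilde f' = 2(\lambda-\mu) f - (h'/h)f' + (h'/h)^2 f$ and the analogue for $\tilde g$; the second-derivative terms cancel because $\tilde f,\tilde g$ satisfy the same second-order ODE, and a short calculation shows $\partial_y \wr[\tilde f,\tilde g] = 0$ while the value reduces to $2(\lambda-\mu)(fg' - f'g) = 2(\lambda-\mu)\wr[f,g]$ after the $h'/h$ contributions cancel. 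I expect this to be three or four lines of bookkeeping once the expression for $\tilde g'$ is in hand.
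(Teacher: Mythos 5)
Your method is the same as the paper's: differentiate \eqref{Prop1:eqn2} in $y$ and simplify using the second-order ODEs for $g$ and $h$, and obtain \eqref{Prop1:eqn3} by computing $\wr\big[\tilde f,\tilde g\big]$ directly from the expressions for $\tilde f'$ and $\tilde g'$. However, there is a sign slip that makes your key cancellation fail as written: from $\L g=\tfrac12 g''-cg=\mu g$ one gets $g''=2(c+\mu)g$, not $2(c-\mu)g$, and likewise $h''=2(c+\lambda)h$. With the correct signs, $\tilde g'=2(\mu-\lambda)g-(h'/h)g'+(h'/h)^2 g$, so $f\tilde g'$ contributes $2(\mu-\lambda)fg$, which \emph{does} cancel the explicit $2(\lambda-\mu)fg$ term; with your stated formula $\tilde g'=2(\lambda-\mu)g+\cdots$ the two terms would add rather than cancel, so your claimed cancellation is inconsistent with your own intermediate expression. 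The same correction should be carried into the second part (also note that with the paper's convention $\wr[f,g]=fg'-f'g$, one has $f\tilde g-g\tilde f=+\wr[f,g]$). Your discussion of \eqref{Prop1:eqn3} is more roundabout than needed: once $\tilde f'$ and $\tilde g'$ are written correctly, $\tilde f\tilde g'-\tilde f'\tilde g$ collapses in one line to $2(\mu-\lambda)(f'g-fg')=2(\lambda-\mu)\wr[f,g]$, which is exactly the paper's one-sentence argument; the detour through intertwining and constancy of $f\tilde g-g\tilde f$ is not needed.
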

The proof of \eqref{Prop1:eqn3} is obtained by calculating $\wr\big[\tilde f, \tilde g\big]$ and using the fact that $g''=2(c+\mu) g$ and $h''=2(c+\lambda) h$. Formula \eqref{Prop1:eqn2} is derived in a similar way, first by taking derivative in $y$ of both sides and then simplifying the result using the above identities for $g''$ and $h''$.

We also record here the following useful fact \big(which follows from the intertwining relation~\eqref{L_tilde_L_intertwining} and our construction of Darboux transformed process $\widetilde Y$\big)
\begin{equation}\label{g_f_intertwining}
  \L_Y f=(\mu+ {\mathfrak m}_h + 2\lambda) f
 \quad \Longrightarrow \quad \L_{\widetilde Y} ({\mathcal D}_h f)=\mu ({\mathcal D}_h f).
\end{equation}

\subsection[Brownian motion on R]{Brownian motion on $\boldsymbol{\r}$}\label{example1}

We take $Y$ to be a standard Brownian motion on $\r$. In this case, we have $l=-\infty$, $r=+\infty$, both boundaries are natural and the transition probability density of $Y$ is
\[
p^Y_t(x,y)=\frac{1}{\sqrt{2\pi t}} {\rm e}^{-\frac{1}{2t} (y-x)^2},\qquad t>0, \quad x,y \in \r.
\]
The infinitesimal generator is $\L_Y=\frac{1}{2} \partial_y^2$, thus the killing term $c(x)$ is zero.
We set $\lambda=1/2$ and take $h(x)=\cosh(x)$. Note that $h$ solves the equation $\L_Y h= \lambda h$. To check that $h$ is indeed $\lambda$-invariant for $Y$, we verify that
\begin{equation}\label{lambda_invariant_h}
\int^r_{l} p^Y_t(x,y) h(y) \d y={\rm e}^{\lambda t} h(x).
\end{equation}
The integral in the left-hand side can be easily computed explicitly. We also provide the {\sc Matlab} code for symbolic verification of this result. Another way to prove that $h$ is $\lambda$-invariant is via~\cite[Theorem 2.7]{Rogers_2021} or \cite[Corollary 2.2]{Mijatovic_2012}.

Next, we verify that Assumption \ref{Assumption_1} holds with ${\mathfrak m}_h=0$ and Assumption \ref{Assumption_2} is also satisfied since both boundaries are natural for the process $X$ with the generator given in \eqref{eqn:L_X}. Thus, we can construct the Darboux transformed process $\widetilde Y$ and identify it as a Brownian motion on~$\r$ killed at rate $\tilde c(y)=\tanh(y)^2$ (this latter expression follows from \eqref{new_tilde_c}).
The transition probability density of $\widetilde Y$ is given by
\begin{align}
p^{\widetilde Y}_t(x,y)={}&\frac{1}{\sqrt{2\pi t}} {\rm e}^{-t-\frac{1}{2t}(y-x)^2}\nonumber\\
&
+ \frac{{\rm e}^{-\frac{t}{2}}}{2\cosh(x) \cosh(y)} \left[
\Phi\left( \frac{y-x+t}{\sqrt{t}} \right) - \Phi\left( \frac{y-x-t}{\sqrt{t}} \right)\right],\label{p_tildeY1}
\end{align}
where $\Phi(x)$ is the CDF of standard normal distribution. Formula \eqref{p_tildeY1} was obtained from
\eqref{equation_tilde_p} by symbolic computation (and then verified numerically).

Now we turn our attention to the Green's function of $\widetilde Y$. The fundamental increasing/de\-creas\-ing solutions of $\L_Y f= \mu f$ (with $\mu>0$) are $\psi_{\mu}^Y(y)=\exp\bigl(\sqrt{2\mu} y\bigr)$ and
$\varphi_{\mu}^Y(y)=\exp\bigl(-\sqrt{2\mu} y\bigr)$. Let us denote
$
F(z,y)={\mathcal D}_h {\rm e}^{z y}={\rm e}^{zy} (z-\tanh(y))$.
The functions
\[
\psi_{\mu}^{\widetilde Y}(y):=F\bigl(\sqrt{2(1+\mu)},y\bigr), \qquad \varphi_{\mu}^{\widetilde Y}(y)
:=F\bigl(-\sqrt{2(1+\mu)},y\bigr)
\]
are the increasing/decreasing fundamental solutions to $\L_{\widetilde Y} f=\mu f$ (this follows from \eqref{g_f_intertwining}). With the help of
 \eqref{Prop1:eqn3} we compute \smash{$\wr\big[ \varphi_{\mu}^{\widetilde Y}; \psi_{\mu}^{\widetilde Y}\big]=-
2(1+2\mu)\sqrt{2(1+\mu)}$} and using \eqref{Green_X} we find the Green's function of $\widetilde Y$ in the form
\begin{align}
G^{\widetilde Y}_{\mu}(x,y)={}&-\frac{1}{\sqrt{2(1+\mu)}(1+2\mu)} \nonumber\\
&\times
\begin{cases}
F\bigl(\sqrt{2(1+\mu)},x\bigr) F\bigl(-\sqrt{2(1+\mu)},y\bigr)  &\text{if }  x<y,\\
F\bigl(-\sqrt{2(1+\mu)},x\bigr) F\bigl(\sqrt{2(1+\mu)},y\bigr)  &\text{if }  y<x.
\end{cases}\label{Green_BM_R}
\end{align}
We claim that the transition probability density of $\widetilde Y$ can be written in the following spectral representation form
\begin{equation}\label{p_BM_R_spectral}
 p^{\widetilde Y}_t(x,y)= \frac{{\rm e}^{-\frac{t}{2}}}{2\cosh(x) \cosh(y)} + \frac{1}{2\pi} \int_{\r} {\rm e}^{-(1+\frac{z^2}{2})t}
F(\i z,x) F(-\i z, y) \frac{\d z}{1+z^2}.
\end{equation}
There are three ways how one can obtain this result. The first method is to use general Sturm--Liouville theory (see \cite[Section 2.2]{sousa-yakubovich2022convolution}). The second is to write the transition probability density as the inverse Laplace transform of the Green's function in
\eqref{Green_BM_R}
\[
p^{\widetilde Y}_t(x,y)=\frac{1}{2\pi }
\int_{c+\i \r} G^{\widetilde Y}_{\mu}(x,y) {\rm e}^{\mu t} \d \mu, \qquad c>0,
\]
and transform the contour of integration to a Hankel-type contour, which goes around the interval $(-\infty,-1]$ in the counterclockwise direction (starting at $-\infty$). While transforming this contour of integration we will collect a residue at $\mu=-1/2$, which will give us the first term in \eqref{p_BM_R_spectral}, and the integral over Hankel's contour will give the integral term in \eqref{p_BM_R_spectral}. The third method is probably the simplest (though the least enlightening): we compute the integral in the right-hand side of \eqref{p_BM_R_spectral} and transform the resulting expression into the form
\eqref{p_tildeY1}. More details on this last method are provided in Appendix~\ref{AppendixA}.

The formula \eqref{p_BM_R_spectral} shows that the effect of Darboux transformation on the spectral representation of the transition semigroup of the diffusion process $Y$ is that we have shifted the spectrum by $-1$ and inserted a new eigenvalue at $-1/2$ (with the corresponding eigenfunction~${1/h(x)=1/\cosh(x)}$).

\subsection[Brownian motion on (0,infty), killed at 0]{Brownian motion on $\boldsymbol{(0,\infty)}$, killed at $\boldsymbol{0}$}\label{example2}

Now we take $Y$ to be a Brownian motion on $(0,\infty)$, killed at the first time it hits $0$. The transition probability density is given by
\begin{gather*}
p_t^Y(x,y)=\frac{1}{\sqrt{2\pi t}} \bigl({\rm e}^{-\frac{1}{2t}(y-x)^2}-{\rm e}^{-\frac{1}{2t}(y+x)^2}\bigr)= \frac{2}{\pi} \int_0^{\infty} {\rm e}^{-\frac{z^2}{2} t}
\sin(z x) \sin(z y) \d z, \\ t>0, \qquad x,y >0,
\end{gather*}
see \cite[p.~120]{Borodin2002}. The infinitesimal generator is $\widetilde \L_Y=\frac{1}{2}\partial_y^2$, the killing term $c(x)$ is zero. The point~${l=0}$
is a non-singular boundary for $Y$ and we have a killing boundary condition $f(0+)=0$. We set $\lambda=1/2$ and find a function $h(x)=\sinh(x)$ by solving equation $\L_Y h= \lambda h$ with the boundary condition $h(0+)=0$ and then we verify that
\eqref{lambda_invariant_h} holds with $\lambda=1/2$ (and $l=0$, $r=+\infty$) (or we apply \cite[Theorem 2.7]{Rogers_2021} or \cite[Corollary 2.2]{Mijatovic_2012}). Thus $h$ is $\lambda$-invariant for~$Y$.

We check that Assumption \ref{Assumption_1} holds with ${\mathfrak m}_h=0$ and that Assumption \ref{Assumption_2} is also satisfied, since both boundaries are natural for the process $X$ with the generator given in \eqref{eqn:L_X}. The process $\widetilde Y$ (the Darboux transform of $Y$) is a killed Brownian motion on $(0,\infty)$ with the killing rate $\tilde c(y)=\coth^{2}(y)$. Both boundaries $0$ and $+\infty$ are natural for $\widetilde Y$. The transition probability density of $\widetilde Y$ is
\begin{gather*}
p_t^{\widetilde Y}(x,y)=\frac{{\rm e}^{-t}}{\sqrt{2\pi t}} \bigl({\rm e}^{-\frac{1}{2t}(y-x)^2}+{\rm e}^{-\frac{1}{2t}(y+x)^2}\bigr) + \frac{{\rm e}^{-t/2}}{2\sinh(x) \sinh(y)} \\
\hphantom{p_t^{\widetilde Y}(x,y)=}{}\times\left[
\Phi\left( \frac{y-x-t}{\sqrt{t}} \right) + \Phi\left( \frac{y+x+t}{\sqrt{t}} \right)
-\Phi\left( \frac{y+x-t}{\sqrt{t}} \right)-\Phi\left( \frac{y-x+t}{\sqrt{t}} \right)\right].
\end{gather*}
The fundamental increasing/decreasing solutions to $\L_{\widetilde Y} f = \mu f$ are
\[
 \psi^{\widetilde Y}_{\mu}(x)=\frac{1}{z^2-1} ( z \cosh(zx) - \sinh(zx) \coth(x)), \qquad
 \varphi^{\widetilde Y}_{\mu}(x)={\rm e}^{-zx} (z+\coth(x)),
\]
where we denoted $z=\sqrt{2(\mu+1)}$. The Green's function for the process $\tilde Y$ is
\[
G^{\widetilde Y}_{\mu}(x,y)=\frac{2}{\sqrt{2(\mu+1)}} \times
\begin{cases}
\tilde \psi^+_{\mu}(x) \tilde \varphi^-_{\mu}(y) &\text{if }  x<y,\\
\tilde \varphi^-_{\mu}(x)\tilde \psi^+_{\mu}(y)  &\text{if }  y<x.
\end{cases}
\]
The spectral representation for \smash{$\tilde p^{\widetilde Y}_t(x,y)$} is given by
\begin{equation}\label{tilde_p_spectral2}
p^{\widetilde Y}_t(x,y)=\frac{2}{\pi} \int_0^{\infty} {\rm e}^{-(1+\frac{z^2}{2}) t}
f(z,x) f(z,y) \frac{\d z}{1+z^2},
\end{equation}
where we denoted $f(z,y):=z \cos(zx)-\sin(zx) \coth(x)$. In this case, the Darboux transformation has resulted in shifting the spectrum by $-1$, but no new eigenvalues are created. Intuitively this can be explained by noting that the function $1/h(x)=1/\sinh(x)$ (which would be the candidate eigenfunction with the eigenvalue $-1/2$ for the transition semigroup of $\tilde Y$) is not in~${L_2((0,\infty),\d x)}$.

\subsection[Brownian motion on (0,infty), killed elastically at 0]{Brownian motion on $\boldsymbol{(0,\infty)}$, killed elastically at $\boldsymbol{0}$}\label{example3}

Next, we consider the example when $Y$ is a Brownian motion on $(0,\infty)$ killed elastically at zero,
see \cite[p.~125]{Borodin2002}. In this case, the infinitesimal generator is $\L_Y=\frac{1}{2}\partial_y^2$, the killing term $c(x)$ is zero. The point $l=0$ is a non-singular boundary for $Y$ and we have an elastic boundary condition~${f'\bigl(0^+\bigr)=\gamma f(0+)}$, where $\gamma>0$. The transition probability is
\[
p^Y_t(x,y)=\frac{1}{\sqrt{2\pi t}} \big[ {\rm e}^{-\frac{1}{2t}(y-x)^2}+
 {\rm e}^{-\frac{1}{2t}(y+x)^2} \big]- 2\gamma {\rm e}^{ \gamma(x+y)+\frac{\gamma^2t}{2} } \Phi\left( -\frac{x+y+\gamma t}{\sqrt{t}} \right).
\]
We set $\lambda=1/2$ and solving the equation $\L_Y h=\lambda h$ with the boundary condition $h'\bigl(0^+\bigr)=\gamma f(0+)$ we find a solution
\begin{equation}\label{h3}
h(y)={\rm e}^{y} + \beta {\rm e}^{-y},
\end{equation}
where $\beta:=(1-\gamma)/(1+\gamma)$ and then we check that $h$ is indeed $\lambda$-invariant for $Y$ by verifying that \eqref{lambda_invariant_h} holds.

Assumption \ref{Assumption_1} holds with ${\mathfrak m}_h=0$ and Assumption \ref{Assumption_2} is also satisfied (here we use Proposition \ref{prop_reflecting} for the left boundary). The Darboux transform of $Y$ is a process $\widetilde Y$, which is a Brownian motion on $(0,\infty)$ killed at rate $\tilde c(y)=(h'(y)/h(y))^2$. The left boundary $l=0$ is a non-singular boundary for $\widetilde Y$, and we have a killing boundary condition at this point. In order to simplify the expression for $\tilde c$, we introduce a positive parameter $\alpha$ such that $|\beta|=\exp(-2 \alpha)$ and with the help of this parameter we can express $h$ in the following form: $h(y)=2 {\rm e}^{-\alpha} \cosh(y+\alpha)$ if~${\gamma \in (0,1)}$, $h(y)=\exp(y)$ if $\gamma=1$ and $h(y)=2 {\rm e}^{-\alpha} \sinh(y+\alpha)$ if $\gamma>1$. Thus, the killing rate of~$\widetilde Y$ is given by
\[
\tilde c(y)=
\begin{cases}
\tanh^2(y+\alpha) &\text{if }  \gamma \in (0,1),\\
1  &\text{if }  \gamma=1,\\
\coth^2(y+\alpha)  &\text{if }  \gamma>1.
\end{cases}
\]
The transition probability density of $\widetilde Y$ is
\begin{align}
p^{\widetilde Y}_t(x,y)={}&\frac{{\rm e}^{-t}}{\sqrt{2 \pi t}}\big[ {\rm e}^{-\frac{1}{2t}(y-x)^2}-{\rm e}^{-\frac{1}{2t}(y+x)^2} \big]-\frac{8\gamma \beta}{h(x)h(y)} {\rm e}^{\gamma(x+y)+(\frac{\gamma^2}{2}-1)t}\nonumber\\
&\times\sinh(x) \sinh(y) \Phi\left(-\frac{y+x+\gamma t}{\sqrt{t}}\right)+\frac{2\beta {\rm e}^{-\frac{t}{2}}}{h(x)h(y)} \left[\Phi\left(\frac{y+x-t}{\sqrt{t}}\right)
\right.\nonumber\\
&\left.\phantom{\times}{}+\Phi\left(\frac{y-x+t}{\sqrt{t}}\right)-\Phi\left(\frac{y+x+t}{\sqrt{t}}\right)-\Phi\left(\frac{y-x-t}{\sqrt{t}}\right)\right],\label{p_tilde_Y3}
\end{align}
where $h$ is given by \eqref{h3}. Note that when $\gamma=1$, we obtain the following result: Darboux transform of Brownian motion on $(0,\infty)$ killed elastically at zero with boundary condition~${f'(0+)=f(0+)}$ is the Brownian motion on $(0,\infty)$ killed at rate one and killed at the first time it hits zero. The simplicity of this result suggests that it may have a pathwise explanation, but we were not able to find it.

For the process $\tilde Y$ in this section, we did not derive the spectral representation of the semigroup, as the resulting expressions were rather complicated.

The results obtained in this section lead to the following.

\begin{Corollary}
 Let $W$ be a standard Brownian motion, $\gamma \in (0,\infty) \setminus \{1\}$, $\beta=(1-\gamma)/(1+\gamma)$ and $|\beta|=\exp(-2 \alpha)$. Then for $t>0$ and $x,y>\alpha$
 \begin{equation}\label{killed_BM_identity}
 \e_x\Big[ {\rm e}^{-\int_0^t \kappa(W_s) \d s} {\mathbf 1}_{\{ W_t \le y, \min\limits_{0\le s \le t} W_s>\alpha\}}\Big]=\int_0^{y-\alpha} p_t^{\widetilde Y}(x-\alpha,u)\d u,
 \end{equation}
 where
 \[
\kappa(x)=
\begin{cases}
\tanh^2(x)  &\text{if }  \gamma \in (0,1),\\
\coth^2(x)  &\text{if }  \gamma>1,
\end{cases}
\]
and $p_t^{\widetilde Y}(x,y)$ is given by \eqref{p_tilde_Y3}.
\end{Corollary}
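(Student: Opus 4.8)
The plan is to recognize the left-hand side of \eqref{killed_BM_identity} as a probabilistic representation of the transition semigroup of a killed Brownian motion, and then to match it with the Darboux-transformed process $\widetilde Y$ of Section~\ref{example3}. First I would set up the auxiliary process: let $\widehat Y$ be the Brownian motion on $(\alpha,\infty)$ killed at rate $\kappa(y)$ and killed at the first time it hits $\alpha$, so that by the Feynman--Kac formula
\[
\e_x\Big[ {\rm e}^{-\int_0^t \kappa(W_s)\,\d s}\, {\mathbf 1}_{\{W_t\le y,\ \min_{0\le s\le t} W_s>\alpha\}}\Big]
=\int_\alpha^y p_t^{\widehat Y}(x,u)\,\d u,
\]
valid for $t>0$ and $x,y>\alpha$; here the indicator $\{\min_{0\le s\le t}W_s>\alpha\}$ is exactly what implements killing at $\alpha$ in the Feynman--Kac representation. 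Then a simple shift of the spatial variable by $\alpha$ turns $\widehat Y$ into a Brownian motion on $(0,\infty)$ killed at rate $\kappa(\cdot+\alpha)$ and killed at $0$; comparing $\kappa(\cdot+\alpha)$ with the killing rate $\tilde c$ computed in Section~\ref{example3} shows that this shifted process is precisely the Darboux transform $\widetilde Y$ in the two cases $\gamma\in(0,1)$ (where $\tilde c(y)=\tanh^2(y+\alpha)$, so $\kappa(x)=\tanh^2(x)$) and $\gamma>1$ (where $\tilde c(y)=\coth^2(y+\alpha)$, so $\kappa(x)=\coth^2(x)$).

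Having identified the shifted process with $\widetilde Y$, the identity $p_t^{\widehat Y}(x,u)=p_t^{\widetilde Y}(x-\alpha,u-\alpha)$ follows from the scale-function/speed-measure transformation rule for the increasing map $y\mapsto y-\alpha$ recalled at the end of Section~\ref{section_preliminaries} (the map preserves Lebesgue speed measure, sends the generator $\tfrac12\partial_y^2-\kappa(y)$ on $(\alpha,\infty)$ to $\tfrac12\partial_y^2-\kappa(y+\alpha)=\tfrac12\partial_y^2-\tilde c(y)$ on $(0,\infty)$, and preserves the killing boundary condition at the left endpoint, which is exactly the non-singular killing boundary condition that Theorem~\ref{thm:main2} gives for $\widetilde Y$). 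Substituting this into the Feynman--Kac formula above and changing the variable of integration $u\mapsto u+\alpha$ then gives
\[
\e_x\Big[ {\rm e}^{-\int_0^t \kappa(W_s)\,\d s}\, {\mathbf 1}_{\{W_t\le y,\ \min_{0\le s\le t} W_s>\alpha\}}\Big]
=\int_0^{y-\alpha} p_t^{\widetilde Y}(x-\alpha,u)\,\d u,
\]
which is \eqref{killed_BM_identity}, with $p_t^{\widetilde Y}$ given by the explicit formula \eqref{p_tilde_Y3} established in Section~\ref{example3}.

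I expect the main obstacle to be the bookkeeping at the boundary rather than anything deep: one must make sure that the ``killed at $0$'' behavior of $\widetilde Y$ (which Theorem~\ref{thm:main2} delivers, since $Y$ in Section~\ref{example3} has a non-singular boundary at $0$ with an \emph{elastic}, hence reflecting-or-elastic, condition) really does correspond to the event $\{\min_{0\le s\le t}W_s>\alpha\}$ on the Brownian side, and that the spatial shift by $\alpha$ is legitimate even though $\widetilde Y$ is only a \emph{killed} Brownian motion (so one is shifting the state interval, not using translation invariance of Brownian motion itself). Both points are handled by the transformation rule for diffusions under increasing differentiable maps and by the Feynman--Kac representation of a killed Brownian motion, so no new estimates are needed; the content of the corollary is simply the assembly of the explicit formula \eqref{p_tilde_Y3} together with the probabilistic interpretation of the Darboux transform.
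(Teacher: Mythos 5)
Your proposal is correct and is essentially the paper's own argument: both rest on identifying $\widetilde Y$ (via its killing rate $\tilde c(y)=\kappa(y+\alpha)$ from Section~\ref{example3} and the killing boundary at $0$ from Theorem~\ref{thm:main2}) with a Brownian motion on $(0,\infty)$ killed at rate $\kappa(\cdot+\alpha)$ and absorbed at $0$, then applying the Feynman--Kac representation and a spatial shift by $\alpha$. The only difference is cosmetic — you work from the left-hand side to the right, while the paper starts from $\int_0^{y-\alpha}p_t^{\widetilde Y}(x-\alpha,u)\,\d u$ and uses translation invariance of Brownian motion to arrive at the left-hand side.
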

\begin{proof}
Using the representation of $\widetilde Y$ as a Brownian motion on $(0,\infty)$, killed at rate $\tilde c(x)=\kappa(x+\alpha)$ and also killed at the first time it reaches zero, we can write
\[
\int_0^{y-\alpha} p_t^{\widetilde Y}(x-\alpha,u)\d u=\p_{x-\alpha}(\widetilde Y_t \le y-\alpha)=\e_{x-\alpha}\Big[ {\rm e}^{-\int_0^t \kappa(B_s+\alpha) \d s} {\mathbf 1}_{\{ B_t \le y-\alpha, \min\limits_{0\le s \le t} B_s>0\}}\Big],
\]
where $B$ is the standard Brownian motion process. To obtain \eqref{killed_BM_identity}, we denote $W_t=B_t+\alpha$ and use translation invariance of Brownian motion.
\end{proof}

Taking derivatives with respect to $y$ and $\alpha$ allows us to obtain the joint probability density function of the Brownian motion killed at rate $\kappa(x)$ and of its running minimum. This extends the results of Sections \ref{example1} and \ref{example2} by providing information on the running minimum of the process $\widetilde Y$ constructed in those sections.

\begin{Remark}
As discussed in the introduction, the Darboux transformation of second-order linear differential operators is invertible. Starting with an operator $\L$ and a positive formal eigenfunction $h$ such that $\L h= \lambda h$, we obtain Darboux transformed operator $\widetilde \L$, for which $1/h$ is also a formal eigenfunction. Applying Darboux transformation to $\tilde \L$ with the seed function~$1/h$, we recover the original operator $\L$. However, this inversion does not work on the level of transition semigroups of diffusion processes. Consider the process $Y$ from this section and the function $h$ given by \eqref{h3}. The function $f=1/h$ is positive and satisfies the equation $\L_{\widetilde Y} f=\lambda f $ (with $\lambda=1/2$), but it is not a $\lambda$-invariant function for \smash{$\widetilde Y$}, since it does not satisfy the boundary condition at $0$ (recall that the process $\widetilde Y$ has a killing boundary condition at zero).
\end{Remark}

\subsection[Brownian motion on (0,1), killed at 0 or 1]{Brownian motion on $\boldsymbol{(0,1)}$, killed at $\boldsymbol{0}$ or $\boldsymbol{1}$}\label{example4}

Let $Y$ be a Brownian motion on $(0,1)$ killed at both boundaries, see \cite[p.\ 122]{Borodin2002}. The transition probability density is
\[
p^Y_t(x,y)=2\sum\limits_{n\ge 1} {\rm e}^{-\frac{1}{2} \pi^2 n^2 t}
\sin(\pi n x) \sin(\pi n y).
\]
The above formula can be seen as a spectral representation of the transition density of $Y$ in~$L_2((0,1),\d x)$. Using orthogonality of $\{\sin(n \pi y)\}_{n\ge 1}$ on the interval $(0,1)$, it is easy to check that~${h(y)=\sin(\pi y)}$ is $\lambda$-invariant function for $Y$ with $\lambda=-\pi^2/2$. We check that Assumption \ref{Assumption_1} holds with ${\mathfrak m}_h=0$ and that Assumption~\ref{Assumption_2} is also satisfied, since both boundaries are natural for the process $X$ with the generator given in \eqref{eqn:L_X}. The Darboux transformed process~$\widetilde Y$ is a~Brownian motion on $(0,1)$, killed at rate
$\tilde c(x)= \pi^2 \cot^{2} (\pi x)$. Both boundaries are natural for~$\widetilde Y$. The transition probability density is obtained using \eqref{equation_tilde_p} and has the form
\begin{equation}\label{BM_4_tilde_p}
p^{\widetilde Y}_t(x,y)=2\sum\limits_{n\ge 2} \frac{{\rm e}^{-\frac{1}{2} \pi^2 ( n^2-2) t} }{n^2-1} f_n(x) f_n(y),
\end{equation}
where we denoted
\[
f_n(x):=\frac{1}{\pi} {\mathcal D}_h \sin(n \pi x)=n \cos(\pi n x)- \sin(\pi n x) \cot(\pi x).
\]
The functions $\{f_n\}_{n\ge 0}$ are orthogonal in $L_2((0,1),\d x)$
\[
\int_0^1 f_n(x) f_m(x) \d x=\frac{1}{2} \bigl(n^2-1\bigr) \delta_{n,m}, \qquad n,m\ge 2.
\]
This result follows by applying \eqref{Prop1:eqn2} with $f=\sin(n \pi x)$ and $g=\sin(m \pi x)$. Thus formula \eqref{BM_4_tilde_p} gives a spectral representation of the transition probability density of $\widetilde Y$ in $L_2((0,1),\d x)$. In this case, the Darboux transformation removes the first eigenvalue from the spectrum of $\L_Y$ and shifts all eigenvalues by $-2 \lambda$.

Now we can repeat this construction. We note that $f_2(x)=-2\sin^2(\pi x)$, thus $h_1(y)=-f_2(y)=2\sin^2(\pi y)$ is a positive $\lambda$-invariant function for $\widetilde Y$, with $\lambda=-\pi^2$. Applying Darboux transformation to $\widetilde Y$ with this $\lambda$-invariant function $\tilde h$, we obtain a new diffusion process, which we denote by $Y^{(2)}$. This process is a Brownian motion on $(0,1)$ killed at rate
$c^{(2)}(x)=3\pi^2 \cot^2(\pi x)$ and having transition probability density
\begin{equation}\label{tildep_2}
p^{(2)}_t(x,y)=2\sum\limits_{n\ge 3} \frac{{\rm e}^{-\frac{1}{2} \pi^2 (n^2-6) t} }{(n^2-1)(n^2-4)} f^{(2)}_n(x) f^{(2)}_n(y),
\end{equation}
where
\[
f^{(2)}_n(x):=\frac{1}{\pi} {\mathcal D}_{h_1} f_n(x)=-(2+n^2) \sin(\pi n x) + 3\sin(\pi n x) \csc^2(\pi x) - 3 n \cos(\pi n x)\cot(\pi x).
\]
The formula \eqref{tildep_2} again gives us a spectral representation of the transition probability density of the process $Y^{(2)}$ in $L_2((0,1), \d x)$.

This process can be repeated indefinitely. The first eigenfunction of the transition semigroup must have a constant sign on $(0,1)$, thus we can take $\lambda$-invariant function $h$ to equal to this first eigenfunction or its negative. After performing Darboux transform $m$ times, we will obtain a~process $Y^{(m)}$, which is a Brownian motion on $(0,1)$, killed at rate $c^{(2)}(x)=\frac{1}{2} m(m+1)\pi^2 \cot^2(\pi x)$. The transition probability can be written in the following form:
\begin{equation}\label{tildep_m}
p^{(m)}_t(x,y)=2\sum\limits_{n\ge m+1} \frac{{\rm e}^{-\frac{1}{2} \pi^2 (n^2-m(m+1)) t}}
{\prod\limits_{j=1}^m \bigl(n^2-j^2\bigr)} f^{(m)}_n(x) f^{(m)}_n(y),
\end{equation}
where the functions $f^{(m)}_n$ can be given explicitly in terms of Gegenbauer polynomials and also in terms of certain Wronskian determinants. The proof of \eqref{tildep_m} will appear in the forthcoming paper \cite{Yuan_2024}.

\subsection[Brownian motion on R killed at rate y\^2/2]{Brownian motion on $\boldsymbol{\r}$ killed at rate $\boldsymbol{y^2/2}$}\label{example5}

Let $Y$ be a Brownian motion on $\r$ killed at rate $c(y)=y^2/2$. It is known that its transition probability density function is given by
Mehler's kernel
\begin{align*}
p^Y_t(x,y)&=\frac{1}{\sqrt{2\pi \sinh(t)}}
\exp\left(- \frac{1}{2}\coth(t) \bigl(x^2+y^2\bigr) + \frac{xy}{\sinh(t)} \right)\\
&=
\frac{1}{\sqrt{\pi}} {\rm e}^{-\frac{1}{2} (x^2+y^2 )}\sum\limits_{n\ge 0} \frac{{\rm e}^{- (n+\frac{1}{2}) t}}{2^n n!} H_n(x) H_n(y).
\end{align*}
Here $H_n(x)$ denote Hermite polynomials.
The process $Y$ is closely related to the Ornstein--Uhlenbeck process $Z$, which has infinitesimal generator $\L_Z=\frac{1}{2} \partial_z^2 - z \partial_z$.
It is known (and easy to check) that $Y$ is Doob's $h$-transform of $Z$
\[
\p_z(Y_t \in A)={\rm e}^{-\frac{t}{2}-\frac{1}{2} z^2} \e_z\big[ {\rm e}^{\frac{1}{2} Z_t^2} {\mathbf 1}_{\{Z_t \in A\}}\big].
\]

Next, we will discuss how to find a $\lambda$-invariant function for the process $Y$. It is known (see~\cite[Example 2.3]{Gomez_2020}) that the functions $f_n(y)={\rm e}^{-y^2/2}H_n(y)$ and $ g_n(y)=\i^n {\rm e}^{y^2/2} H_n(\i y)$ are formal eigenfunctions of the operator $\L=\frac{1}{2} \bigl(\partial_y^2 - y^2\bigr)$. Hermite polynomials $H_n(y)$ have real zeros if~$n\ge 1$ and the functions $H_n(\i y)$ have a zero at $y=0$ when $n$ is odd, so these are not good candidates for
a positive $\lambda$-invariant function and we should take $h$ as a multiple of~$f_0$ or~$g_{2n}$. Taking $h(y)=\exp\bigl(\pm y^2/2\bigr)$ does not lead to a new process, as $\tilde c(y)$ given by \eqref{new_tilde_c} is equal to~$c(y)$ up to a constant. Thus, the first non-trivial example can be obtained when we take~$h$ as a~multiple of $g_2(y)$. We set $h(y)={\rm e}^{y^2/2} \bigl(2y^2+1\bigr)$ and check (by computing the integral in~\eqref{lambda_invariant_h} symbolically
or by applying \cite[Theorem 2.7]{Rogers_2021} or \cite[Corollary 2.2]{Mijatovic_2012}) that $h$ is $\lambda$-invariant for $Y$ with $\lambda=5/2$. Now we can construct the Darboux transformed process $\widetilde Y$, which is
 a Brownian motion on $\r$ killed at rate
\[
\tilde c(y)=\frac{y^2}{2}+\frac{8 y^2 \bigl(2y^2 + 3\bigr)}{\bigl(2y^2+1\bigr)^2}.
\]
The transition probability density of $\widetilde Y$ (computed via \eqref{equation_tilde_p}) has a surprisingly simple form
\begin{equation}\label{eq:tilde_p_OU}
p^{\widetilde Y}_t(x,y)
=p^Y_t(x,y) \times {\rm e}^{-4t}
\left[ 1 + \frac{4 \sinh(t) \bigl({\rm e}^{t}-2xy\bigr)}{\bigl(2x^2+1\bigr)\bigl(2y^2+1\bigr)}\right].
\end{equation}

Next, we present the spectral expansion of the transition probability density. The operator~${\L_Y=\frac{1}{2} \bigl(\partial_y^2 - y^2\bigr)}$ has a complete set of orthogonal eigenfunctions $f_n(y)={\rm e}^{-y^2/2}H_n(y)$ in~$L_2(\r,\d x)$. We know that $1/h(y)$ is a formal eigenfunction of \smash{$\widetilde Y$}. We can expect that \smash{$\tilde f_n(y) ={\mathcal D}_h f_n(y)$}
to be the eigenfunctions of $\L_{\widetilde Y}$ in $L_2(\r, \d x)$. We compute
\[
\tilde f_n(y) ={\mathcal D}_h f_n(y)=f_n'(y)-\frac{h'(y)}{h(y)}f_n(y)=\frac{Q_n(y)}{h(y)},
\]
where $Q_n(y)$ is the following polynomial
\[
Q_n(y)=\wr[h,f_n]=2n\bigl(2y^2+1\bigr) H_{n-1}(y)-\bigl(4y^3+6y\bigr) H_n(y), \qquad n\ge 0.
\]
Note that $Q_n$ are polynomials of degree $n+3$.
\begin{Proposition}
The set of polynomials $\{1\} \cup \{Q_n\}_{n \ge 0}$ is a complete orthogonal set in the space $L_2(\r, \nu(\d y))$, where
\[
\nu(\d y):=h^{-2}(y) \d y=\frac{{\rm e}^{-y^2}}{\bigl(2y^2+1\bigr)^2} \d y.
\]
The transition probability density of $\widetilde Y$ has spectral representation
\begin{equation}\label{OU_spectral}
p^{\widetilde Y}_t(x,y)=
\frac{1}{\sqrt{\pi}} \frac{{\rm e}^{-\frac{1}{2}(x^2+y^2)}}
{\bigl(2x^2+1\bigr)\bigl(2y^2+1\bigr)} \times
\left[ 2 {\rm e}^{-\frac{5}{2} t} +\sum\limits_{n\ge 0} {\rm e}^{-(n+11/2) t} \frac{Q_n(x) Q_n(y)}
{2^{n+1} n! (n+3)} \right].
\end{equation}
\end{Proposition}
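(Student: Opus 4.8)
The plan is to reduce the statement to a single fact about the transition semigroup $\{\widetilde\P_t\}$ of $\widetilde Y$ on $L_2(\r,\d y)$: since multiplication by $1/h$ is a unitary map of $L_2(\r,\nu(\d y))$ onto $L_2(\r,\d y)$ sending $1\mapsto 1/h$ and $Q_n\mapsto\tilde f_n$, it suffices to prove that $\{1/h\}\cup\{\tilde f_n\}_{n\ge0}$ is a complete orthogonal set of eigenfunctions of $\widetilde\P_t$; the first assertion of the Proposition then follows by transporting back through $1/h$, and \eqref{OU_spectral} becomes the eigenfunction expansion of the Hilbert--Schmidt operator $\widetilde\P_t$.

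I would first pin down the eigenfunctions and eigenvalues. The Hermite functions $f_n(y)={\rm e}^{-y^2/2}H_n(y)$ satisfy $\L_Y f_n=-(n+\tfrac12)f_n$, so with ${\mathfrak m}_h=0$ and $\lambda=\tfrac52$ the implication \eqref{g_f_intertwining} yields $\L_{\widetilde Y}\tilde f_n=-(n+\tfrac{11}2)\tilde f_n$, while $\L_{\widetilde Y}(1/h)=-({\mathfrak m}_h+\lambda)(1/h)=-\tfrac52(1/h)$ (equivalently $\widetilde\L(1/h)=\lambda(1/h)$ combined with \eqref{new_tilde_c}). Each of these functions equals a polynomial times ${\rm e}^{-y^2/2}/(2y^2+1)$, hence lies in $L_2(\r,\d y)$; as both endpoints $\pm\infty$ are natural for $\widetilde Y$, membership in $L_2$ already places them in the domain of the self-adjoint generator, so they are bona fide eigenfunctions of $\widetilde\P_t$. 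Orthogonality and the normalizing constants then come straight from Lemma~\ref{lemma_Dh}: applying \eqref{Prop1:eqn2} with $f=f_n$, $g=f_m$ and using $[f_n\tilde f_m]_{-\infty}^{\infty}=0$ gives $\int_\r\tilde f_n\tilde f_m\,\d y=2(\lambda-\mu)\int_\r f_n f_m\,\d y$, which vanishes for $n\ne m$ and equals $2(n+3)\sqrt\pi\,2^n n!$ for $n=m$ (using $\int_\r{\rm e}^{-y^2}H_n^2\,\d y=\sqrt\pi\,2^n n!$); likewise $\int_\r(1/h)\tilde f_n\,\d y=\int_\r(f_n/h)'\,\d y=[f_n/h]_{-\infty}^{\infty}=0$, and a direct integration gives $\|1/h\|_{L_2(\d y)}^2=\int_\r{\rm e}^{-y^2}(2y^2+1)^{-2}\,\d y=\sqrt\pi/2$. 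These are exactly the reciprocals of the constants appearing in \eqref{OU_spectral}.

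The main obstacle is completeness, and it genuinely needs an argument: since $1,Q_0,Q_1,\dots$ have degrees $0,3,4,5,\dots$, they span only a codimension-two subspace of the polynomials, so density in $L_2(\r,\nu(\d y))$ is not a formal consequence of the determinacy of $\nu$. I would instead show that $\{1/h\}\cup\{\tilde f_n\}$ exhausts the $L_2$-eigenfunctions of $\widetilde Y$. From \eqref{eq:tilde_p_OU} and the Gaussian-type decay of Mehler's kernel, $p_t^{\widetilde Y}\in L_2(\r^2,\d x\,\d y)$ for every $t>0$, so $\widetilde\P_t$ is Hilbert--Schmidt, hence compact and self-adjoint; it is also injective, because $\widetilde\P_t=\widetilde\P_{t/2}\widetilde\P_{t/2}$ with $\widetilde\P_{t/2}$ self-adjoint forces $\ker\widetilde\P_t=\ker\widetilde\P_{t/2}=\dots=\ker\widetilde\P_{t/2^k}$, and strong continuity at $0$ makes any element of this kernel vanish. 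Thus $L_2(\r,\d y)$ is the closed span of the eigenfunctions of $\widetilde\P_t$, and it remains to verify there are no others. An eigenfunction $\chi$ (necessarily with positive eigenvalue ${\rm e}^{\mu t}$) solves $\L_{\widetilde Y}\chi=\mu\chi$ classically, and since the confining potential $\tilde c(y)\sim y^2/2$ forces $\chi,\chi',y\chi\in L_2$, the function $g:={\mathcal D}_{1/h}\chi$ lies in $L_2(\r,\d y)$ and, running \eqref{L_tilde_L_intertwining} backwards with seed $1/h$ (keeping track of the shift $\widetilde\L=\L_{\widetilde Y}+{\mathfrak m}_h+2\lambda$), satisfies $\L_Y g=(\mu+5)g$. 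As $\L_Y$ is the harmonic-oscillator operator, with pure point $L_2$-spectrum $\{-(n+\tfrac12)\}_{n\ge0}$ and eigenfunctions $f_n$, either $g=0$, whence $\chi\in\ker{\mathcal D}_{1/h}=\operatorname{span}\{1/h\}$ and $\mu=-\tfrac52$, or $\mu+5=-(n+\tfrac12)$ with $g\propto f_n$, and then $\widetilde\L-\lambda=\tfrac12{\mathcal D}_h{\mathcal D}_{1/h}$ gives $(\mu+5-\lambda)\chi=\tfrac12{\mathcal D}_h g\propto\tilde f_n$, forcing $\chi\propto\tilde f_n$ and $\mu=-(n+\tfrac{11}2)$. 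Hence the eigenvalues are exactly $-\tfrac52$ and $-(n+\tfrac{11}2)$, all simple, and $\{1/h\}\cup\{\tilde f_n\}$ is complete.

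Given completeness, the first assertion is immediate by transporting through the unitary $p\mapsto p/h$. For the spectral representation, $\widetilde\P_t$ is Hilbert--Schmidt with the complete orthogonal eigenbasis just found, so its kernel equals $\sum_k{\rm e}^{\mu_k t}\phi_k(x)\phi_k(y)/\|\phi_k\|^2$ in $L_2(\r^2,\d x\,\d y)$; substituting $\phi_k\in\{1/h\}\cup\{\tilde f_n\}$, the norms computed above, $\tilde f_n=Q_n/h$, and $1/(h(x)h(y))={\rm e}^{-(x^2+y^2)/2}/((2x^2+1)(2y^2+1))$ produces \eqref{OU_spectral}, and the semigroup identity $p_t=p_{t/2}\star p_{t/2}$ upgrades the convergence to locally uniform so that the formula holds pointwise. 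As an alternative, \eqref{OU_spectral} can also be checked head-on from \eqref{eq:tilde_p_OU} by inserting Mehler's expansion of $p_t^Y$ and re-summing, using $1/(n+3)=\int_0^1 s^{n+2}\,\d s$ to absorb the denominator; on that route, completeness drops out of the resulting series together with strong continuity of $\widetilde\P_t$ at $0$.
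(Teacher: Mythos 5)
Your proposal is correct, but it reaches the conclusion by a genuinely different route than the paper. The paper's proof is essentially a reduction to the literature on exceptional Hermite polynomials: it identifies $Q_n$ (up to the constant $-16(n+1)(n+2)$) with the exceptional Hermite polynomials $H^{(1)}_n=\wr[H_1,H_2,H_n]$ via a cited pseudo-Wronskian equivalence, and then imports both the completeness of $\big\{H^{(1)}_n\big\}_{n\in S}$ in $L_2(\r,\nu(\d y))$ and their squared norms from G\'omez-Ullate--Grandati--Milson, after which the expansion \eqref{OU_spectral} is read off. You instead prove completeness from scratch: you observe (correctly, and this is the crux) that since $\deg Q_n=n+3$ the system spans only a codimension-two subspace of the polynomials, so density is not automatic; you then show $\widetilde \P_t$ is Hilbert--Schmidt, self-adjoint and injective, hence diagonalizable with complete eigenbasis, and you exhaust the point spectrum by pushing any $L_2$-eigenfunction through ${\mathcal D}_{1/h}$ back to the harmonic oscillator, whose $L_2$-spectrum is known. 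Your computation of the orthogonality relations and norms via Lemma~\ref{lemma_Dh} coincides with the alternative the paper itself mentions parenthetically. What each approach buys: the paper's is shorter and situates the example inside the exceptional-orthogonal-polynomial framework, but depends on external results; yours is self-contained, makes the role of the intertwining/factorization explicit in ruling out extra spectrum, and generalizes more readily to other seed functions where the resulting polynomials have not already been catalogued. The only places where your argument leans on standard but unstated facts are the identification of the $L_2$-generator of the probabilistic semigroup with the unique self-adjoint realization of $\tfrac12\partial_y^2-\tilde c(y)$ (justified here because both boundaries are natural and the confining potential puts the operator in the limit-point case at $\pm\infty$) and the form-domain bound $\chi',y\chi\in L_2$ for eigenfunctions; both are routine for Schr\"odinger operators with confining potentials, so I do not regard them as gaps.
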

\begin{proof}
Let $S=\{0,3,4,5,6,\dots\}$ be the set of non-negative integers that is missing $\{1,2\}$ and let us define polynomials \smash{$H^{(1)}_n(y)$} via
\[
H^{(1)}_n(y)=\wr[H_1,H_2,H_n](y)=16 H_n(y)-16 x H_n'(y)+4 \bigl(2x^2+1\bigr) H_n''(y).
\]
Note that \smash{$H^{(1)}_n \neq 0$} if and only if $n\in S$.
The polynomials \smash{$\big\{H^{(1)}_n\big\}_{n\in S}$} are the exceptional Hermite polynomials corresponding to the partition $2=1+1$, see \cite[Definition 5.1]{Ullate_2014}. It is known (see~\cite[Propositions 5.7 and 5.8]{Ullate_2014}) that \smash{$\big\{H^{(1)}_n\big\}_{n \in S}$} form a complete orthogonal set in $L_2(R,\nu(\d x))$ with the squared norm
\begin{equation}\label{H1_n_squared_norm}
\int_{\r} H^{(1)}_n(y)^2 \nu(\d y)=\sqrt{\pi} 2^{n+6} n! (n-1)(n-2).
\end{equation}
Using \cite[Theorem 3.1]{Ullate_2018}, we can write
\begin{equation}\label{H1_n_as_Qn}
H^{(1)}_n(y)=-16 (n+1)(n+2) Q_n(y), \qquad n\ge 0,
\end{equation}
and clearly \smash{$H^{(1)}_n(y)=16$}. Thus the polynomials \smash{$\big\{H^{(1)}_n\big\}_{n\in S}$} and $\{1\} \cup \{Q_n\}_{n \ge 0}$ are identical, up to a multiplicative constant. This implies that the functions $\{1/h\} \cup \{Q_n/h\}_{n \ge 0}$ form a complete orthogonal set of eigenfunctions of $\L_{\widetilde Y}$ in $\L_2(\r,\d x)$ with eigenvalues $-5/2$ and $-(n+11/2)$, $n\ge 0$. The $L_2$-norm of these functions can be found via \eqref{H1_n_squared_norm} and \eqref{H1_n_as_Qn} (though one could find the norms of $Q_n/h$ more directly via Lemma \ref{lemma_Dh}
\[
\int_{\r} \frac{1}{h(y)^2} \d y=\nu(\r)=\frac{\sqrt{\pi}}{2}, \qquad \int_{\r} \frac{Q_n(y)^2}{h(y)^2} \d y=
\int_{\r} Q_n(y)^2 \nu(\d y)=\sqrt{\pi} 2^{n+1} n! (n+3).
\]
One can now write down the transition probability density of $\widetilde Y$ in the form \eqref{OU_spectral}.
\end{proof}

In conclusion, we would like to discuss the connection of our results with the existing literature on Darboux transformation and propagators of
for the one-dimensional Schr\"odinger equation. Our formula \eqref{equation_tilde_p} has an analogue in \cite[Theorem~2]{Pupasov_2007}.
The example~in Section~\ref{example1} should be compared with the results of \cite{Pupasov_2005} and the example in Section~\ref{example4} with \cite[Example~5.1]{Pupasov_2007}.
Our example in this section is closely related to the propagators of rational extensions of harmonic oscillator considered in \cite{Pupasov_2015}. Moreover, the results in \cite{Pupasov_2015} imply that if the potential
\[
u(y)=y^2 + \frac{a(y)}{b(y)}
\]
is a rational extension of harmonic oscillator (as defined in \cite{Ullate_2014}) then the process $\widetilde Y$, which is the Brownian motion on $\r$ killed at rate $\tilde c(y)=u(y)/2$, will have transition probability density of the form similar to \eqref{eq:tilde_p_OU}
\[
p_t^{\widetilde Y}(x,y)=p_t^Y(x,y) \times {\rm e}^{-c t} \frac{Q\bigl(x,y,{\rm e}^{t}\bigr)}{R(x,y)},
\]
where $c$ is a constant and $Q(x,y,z)$ and $R(x,y)$ are certain polynomials that can be computed explicitly. Thus, there exists a hierarchy of diffusion processes on $\r$, which could be called rational extensions of Ornstein--Uhlenbeck process, for which the transition probability density can be computed in a fairly simple form. We plan to investigate this family of diffusion processes in future work.

\appendix

\section[Proof of identities (5.7) and (5.8)]{Proof of identities (\ref{p_BM_R_spectral}) and (\ref{tilde_p_spectral2})}\label{AppendixA}

We begin with the following result
\begin{equation}
 \label{Phi_integral_iz}
 \frac{{\rm e}^{w}}{2\pi } \int_{\r} \frac{{\rm e}^{-t\frac{z^2}{2}+\i zw }}{\i z+1} \d z=
 {\rm e}^{\frac{t}{2}} \Phi\left(\frac{w-t}{\sqrt{t}}\right), \qquad t>0, \quad w\in \r,
\end{equation}
which can be established by taking the derivative in $w$ of both sides and computing the integral in the left-hand side. From
\eqref{Phi_integral_iz}, we obtain the following pair of Fourier transform identities:
\begin{gather}
 \frac{1}{2\pi} \int_{\r} \frac{{\rm e}^{- t \frac{z^2}2 + \i z w }}{1+z^2} \d z=
 \frac{1}{2} \left[ {\rm e}^{\frac{t}{2} + w} \Phi\left( - \frac{w+t}{\sqrt{t}}\right) +
 {\rm e}^{\frac{t}{2} - w} \Phi\left( \frac{w-t}{\sqrt{t}}\right)\right],\label{Fourier1}\\
 \frac{\i}{2\pi} \int_{\r} \frac{{\rm e}^{- t \frac{z^2}2 + \i z w }}{1+z^2} z \d z=
 \frac{1}{2} \left[ {\rm e}^{\frac{t}{2} + w} \Phi\left( - \frac{w+t}{\sqrt{t}}\right) -
 {\rm e}^{\frac{t}{2} - w} \Phi\left( \frac{w-t}{\sqrt{t}}\right)\right]. \label{Fourier2}
\end{gather}
We write
\[
F(\i z ,x) F(-\i z,y)={\rm e}^{\i z (x-y) }
\big[ \bigl(1+z^2\bigr) + \i z (\tanh(x)-\tanh(y))+(\tanh(x)\tanh(y)-1)\big]
\]
and now we can compute the integral in \eqref{p_BM_R_spectral}
\begin{gather*}
\frac{1}{2\pi} \int_{\r} {\rm e}^{- (1+\frac{z^2}{2})t}
F(\i z,x) F(-\i z, y) \frac{\d z}{1+z^2}\\
\qquad=\frac{1}{\sqrt{2\pi t}} {\rm e}^{-t-\frac{1}{2t}(x-y)^2} +\frac{1}{2} (\tanh(x)-\tanh(y))\\
\phantom{\qquad=}{}\times \left[ {\rm e}^{-\frac{t}{2} + (x-y)} \Phi\left( \frac{y-x-t}{\sqrt{t}}\right) -
 {\rm e}^{-\frac{t}{2} - (x-y)} \Phi\left( \frac{x-y-t}{\sqrt{t}}\right)\right]\\
\phantom{\qquad=\times}{}+\frac{1}{2} \bigl(\tanh(x) \tanh(y) -1)\\
\phantom{\qquad=}{}\times
\left[ {\rm e}^{-\frac{t}{2} + (x-y)} \Phi\left( \frac{y-x-t}{\sqrt{t}}\right) +
 {\rm e}^{-\frac{t}{2} - (x-y)} \Phi\left( \frac{x-y-t}{\sqrt{t}}\right)\right].
\end{gather*}
Plugging the above expression into the right-hand side of \eqref{p_BM_R_spectral} and simplifying the result will give us \smash{$p^{\widetilde Y}_t(x,y)$}.

The proof of \eqref{tilde_p_spectral2} is very similar. We write the integral in the right-hand side of \eqref{tilde_p_spectral2} as
a~sum of five integrals
\begin{gather*}
 \int_0^{\infty} {\rm e}^{-(1+\frac{z^2}{2}) t}
f(z,x) f(z,y) \frac{\d z}{1+z^2}\\
\qquad= \int_0^{\infty} {\rm e}^{-(1+\frac{z^2}{2}) t}
\cos(zx)\cos(zy) \d z
- \coth(x) \int_0^{\infty} {\rm e}^{-(1+\frac{z^2}{2}) t} \frac{\cos(zy) \sin(zx)}{1+z^2}z \d z\\
\phantom{\qquad= }{}- \coth(y) \int_0^{\infty} {\rm e}^{-(1+\frac{z^2}{2}) t} \frac{\cos(zx) \sin(zy)}{1+z^2} z \d z
\\
\phantom{\qquad= }{}+ \coth(x) \coth(y)
 \int_0^{\infty} {\rm e}^{-(1+\frac{z^2}{2}) t} \frac{\sin(zx) \sin(zy)}{1+z^2} \d z
\\
\phantom{\qquad= }{} -
 \int_0^{\infty} {\rm e}^{-(1+\frac{z^2}{2}) t} \frac{\cos(zx) \cos(zy)}{1+z^2} \d z.
\end{gather*}
Each of these integrals can be computed by using product-to-sum identities for the trigonometric functions and applying Fourier transform
identities \eqref{Fourier1} and \eqref{Fourier2}. The resulting expression, when plugged into the right-hand side of \eqref{tilde_p_spectral2}, after some long and tedious simplifications, will give us \smash{$p^{\widetilde Y}_t(x,y)$}.

\section[Proof of (ii)]{Proof of (ii)}\label{AppendixB}

We present the proof in the case when $l$ is a non-singular boundary for the process $Y$ from Section~\ref{section_Darboux_diffusion}. The proof for the case when $r$ is non-singular is identical. We recall that $\L_Y = \frac{1}{2} \partial_y^2 - c(y)$ (where $c$ is a positive continuous function on $(l,r)$) and that the boundary $l$ is non-singular for $Y$ if and only if $l$ is finite and
\begin{equation}\label{c_L1}
\int_l^z c(y) \d y<\infty,
\end{equation}
for some $z\in (l,r)$. Let $\lambda$ be a real number and $h$ be a $\lambda$-invariant function for $Y$.

First, let us establish the following result: any positive solution to $\L_Y f=\lambda f$ must be bounded on $(l,z)$. This result is undoubtedly known in Sturm--Liouville theory, but we were unable to find a reference, so we will include a short proof here. We denote $g(y)=f'(y)/f(y)$ and check that $g$ satisfies the Riccati equation
$
g'(y)=2 (c(y)+\lambda)-g(y)^2$, $ y\in (l,r)$,
which implies (by integrating over $y\in (w,z)$)
\[
g(z)-g(w) \le 2 \lambda (z-w) + 2 \int_w^z c(y) \d y, \qquad l<w<z<r.
\]
Therefore, $-g(w)$ is bounded from above on the interval $(l,z)$ (due to \eqref{c_L1}), and since
\[
f(y)=f(z) \exp\left( -\int_y^z g(w) \d w\right), \qquad l<y<z,
\]
we conclude that $f$ is also bounded on $(l,z)$.

Now we are ready to prove that $h$ satisfies the appropriate boundary condition at $l$. We denote by $\psi_{\mu}$ and $\varphi_{\mu}$ the fundamental increasing/decreasing solutions to $\L_Y f = \mu f$. We will show that~$h$ satisfies the same boundary condition at $l$ as $\psi_{\mu}$.
The starting point is the identity~\eqref{h_lambda_invariant2}, which, coupled with the representation \eqref{Green_X} for the Green's function of the process $Y$, gives~us
\[
h(x)=(\mu-\lambda) \left[ \varphi_{\mu}(x) \int_l^x \psi_{\mu}(y) h(y) \d y+ \psi_{\mu}(x) \int_x^r \varphi_{\mu}(y) h(y) \d y \right].
\]
The above holds for $\mu > \max(0,\lambda)$ and $x\in (l,r)$. As we established above, both functions $\varphi_{\mu}$ and $h$ are bounded (thus, integrable) on $(l,z)$, thus we can rewrite the above identity in the form
\begin{align*}
h(x)={}&(\mu-\lambda) \psi_{\mu}(x) \int_l^r \varphi_{\mu}(y) h(y) \d y+ (\mu-\lambda) \varphi_{\mu}(x) \int_l^x \bigl(\psi_{\mu}(y)- \psi_{\mu}(x)) h(y) \d x \\
&- (\mu-\lambda)
\psi_{\mu}(x) \int_l^x \bigl(\varphi_{\mu}(y)- \varphi_{\mu}(x)) h(y) \d x.
\end{align*}
Since $\psi_{\mu}$, $\varphi_{\mu}$ and $h$ are bounded on $(l,z)$, the above identity implies that as $x\to l+$ we have
\begin{gather}
h(x)=C \psi_{\mu}(x) + O(x-l), \label{h_psi_mu}\\
h'(x)=C \psi'_{\mu}(x) + \varphi'_{\mu}(x) O(x-l)+
\psi'_{\mu}(x) O(x-l),\label{h_psi_mu2}
\end{gather}
where we denoted
\[
C=C(\mu,\lambda):=(\mu-\lambda) \int_l^r \varphi_{\mu}(y) h(y) \d y.
\]
If we have a killing boundary condition for $Y$ at point $l$, that is $f(l+)=0$, then \eqref{h_psi_mu} implies~${h(l+)=\psi_{\mu}(l+)=0}$. If we have a reflecting boundary condition at $l$, then $\psi_{\mu}'(l+)=0$ and~\eqref{h_psi_mu2} implies $h'(l+)=0$. Here we also need the fact that $\psi_{\mu}'(x)$ and $\varphi_{\mu}'(x)$ are bounded near point $l$ (the first one is bounded near $l$ since $\psi_{\mu}(x)$ is convex and increasing and the second one is bounded because the Wronskian
$\wr[\varphi_{\lambda}; \psi_{\lambda}]$ is a non-zero constant).
In the case when we have an elastic boundary condition for $Y$ at point $l$, that is $f(l+)=\gamma f'(l+)$, we can obtain~${h(l+)=\gamma h'(l+)}$ using the same argument.

\subsection*{Acknowledgements} The research was supported by the Natural Sciences and Engineering Research Council of
Canada. The authors would like to thank Mateusz Kwa\'snicki for stimulating discussions and for helping with the proof of Theorem~\ref{thm:main}. We are also grateful to anonymous referees for carefully reading the paper and for providing very helpful comments and suggestions.

\pdfbookmark[1]{References}{ref}
\LastPageEnding

\end{document}